\newcommand{\bydef}{:=}
\newcommand{\lspan}[1]{\mathrm{span}\left\{#1\right\}}
\newcommand{\tr}{\mathrm{tr}}
\newcommand{\sr}{\mathrm{sr}}
\newcommand{\cC}{\mathcal{C}}
\newcommand{\cK}{\mathcal{K}}
\newcommand{\cN}{\mathcal{N}}
\newcommand{\cO}{\mathcal{O}}
\newcommand{\cQ}{\mathcal{Q}}
\newcommand{\cS}{\mathcal{S}}
\newcommand{\cU}{\mathcal{U}}
\newcommand{\cV}{\mathcal{V}}
\newcommand{\cW}{\mathcal{W}}
\newcommand{\ZZ}{\mathbb{Z}}
\newcommand{\CC}{\mathbb{C}}
\newcommand{\FF}{\mathbb{F}}
\newcommand{\chr}[1]{\mathrm{char}\,#1}
\DeclareMathOperator{\Aut}{\mathrm{Aut}}
\DeclareMathOperator{\AAut}{\mathbf{Aut}}
\DeclareMathOperator{\Der}{\mathrm{Der}}
\DeclareMathOperator{\Centr}{\mathrm{Centr}}
\newcommand{\Lie}{\mathrm{Lie}}
\newcommand{\ad}{\mathrm{ad}}
\newcommand{\frsl}{{\mathfrak{sl}}}
\newcommand{\Gs}{\mathbf{G}}
\newcommand{\Hs}{\mathbf{H}}
\newcommand{\Ks}{\mathbf{K}}
\newcommand{\Ds}{\mathbf{D}}
\newcommand{\bmu}{\boldsymbol{\mu}}
\def\bigstrut{\vrule height 16pt width 0ptdepth 6pt}
\newenvironment{romanenumerate}
 {\begin{enumerate}
 
 }{\end{enumerate}}
\newtheorem{theorem}{Theorem}
\newtheorem{proposition}[theorem]{Proposition}
\newtheorem{lemma}[theorem]{Lemma}
\theoremstyle{definition}
\newtheorem{df}[theorem]{Definition}
\theoremstyle{remark}
\newtheorem{remark}[theorem]{Remark}
\numberwithin{equation}{section}
\begin{document}

\title{Okubo algebras: automorphisms, derivations and idempotents}

\author{Alberto Elduque}
\address{Departamento de Matem\'aticas
 e Instituto Universitario de Matem\'aticas y Aplicaciones,
 Universidad de Zaragoza, 50009 Zaragoza, Spain}
\email{elduque@unizar.es}
\thanks{Supported by the Spanish Ministerio de Econom\'{\i}a y Competitividad---Fondo Europeo de Desarrollo Regional (FEDER) MTM2010-18370-C04-02 and of the Diputaci\'on General de Arag\'on---Fondo Social Europeo (Grupo de Investigaci\'on de \'Algebra)}

\subjclass[2010]{Primary 17A75; Secondary 17B60}

\keywords{Okubo algebra; grading; automorphism; derivation}

\date{}

\begin{abstract}
A survey of some properties of Okubo algebras is presented here. Emphasis is put on automorphisms and derivations of these algebras, especially in characteristic three, where the situation is more involved and interesting.
In this case, the Okubo algebra is closely related to a nodal noncommutative Jordan algebra. These latter algebras have appeared several times in the work of Helmut Strade.
\end{abstract}

\dedicatory{Dedicated to Helmut Strade on the occasion of his $70^\text{th}$ birthday}

\maketitle

\bigskip

\section{Introduction}
One of the first papers by Helmut Strade \cite{Str72} is about nodal noncommutative Jordan algebras and their relationship to simple modular Lie algebras. I was aware of this paper when dealing with Okubo algebras, which form an important class of symmetric composition algebras (see \cite[Ch.~VIII]{KMRT}), and are quite useful in dealing with the phenomenon of triality in dimension $8$. It turns out that the behavior, and even the definition, of these algebras is quite different over fields of characteristic three. The computation of its group of automorphisms (rational points) and of its Lie algebra of derivations in this case was performed in \cite{Eld99} by relating it to some nodal noncommutative Jordan algebras.
The aim of this paper is to survey some of these results.

In order to avoid technical details, we will work over an algebraically closed field $\FF$. The \emph{Okubo algebra} $\cO$ over $\FF$ will be defined in the next section, where a particular grading by $\ZZ_3^2$ will be highlighted. The relations of the Okubo algebra with a nodal noncommutative Jordan algebra in characteristic three will be given in Section 3, and these will be used to compute the Lie algebra of derivations of $\cO$. Section 4 will be devoted to the automorphisms of $\cO$. It turns out that in characteristic three, the subalgebra of elements fixed by all the automorphisms of $\cO$ is spanned by a distinguished idempotent. Idempotents are important for the study of the Okubo algebras, and more generally of the symmetric composition algebras. Section 5 will be devoted to them.

Most of the mathematical career of Helmut Strade has been devoted to the classification of the finite-dimensional simple modular Lie algebras \cite{Str1,Str2,Str3} over an algebraically closed field. Very soon one learns that the behavior of these algebras is much worse than in characteristic zero. Here, in characteristic three, the Lie algebra $\Der(\cO)$ will be shown to present some of the difficulties that appear in prime characteristic:
\begin{itemize}
\item $\Der(\cO)$ is a semisimple Lie algebra of dimension $8$, but not a direct sum of simple Lie algebras.
\item $[\Der(\cO),\Der(\cO)]$ is simple (of Hamiltonian type), but its Killing form is trivial, and it has outer derivations.
\item The affine group scheme of automorphism $\AAut(\cO)$ is not smooth. Its dimension is $8$. The two missing dimensions ($\dim\Der(\cO)=10$) are explained in terms of the $\ZZ_3^2$-grading mentioned above.
\end{itemize}

\smallskip

Throughout the paper, $\FF$ will denote an algebraically closed field.

\bigskip

\section{Okubo algebras}

Assume first that the characteristic of $\FF$ is not three, so $\FF$ contains a primitive cubic root $\omega$ of $1$. Let $M_3(\FF)$ be the associative algebra of $3\times 3$-matrices over $\FF$. Let $\frsl_3(\FF)$ denote the corresponding special Lie algebra. Define a binary multiplication on $\frsl_3(\FF)$ as follows:
\begin{equation}\label{eq:x*y}
x*y=\omega xy -\omega^2yx-\frac{\omega-\omega^2}{3}\tr(xy)1.
\end{equation}
(Here $\tr$ denotes the usual trace and the product of matrices is denoted by juxtaposition. Note that $\tr(x*y)=0$, so this multiplication on $\frsl_3(\FF)$ is well defined.)

A straightforward computation \cite{EM93} shows that
\begin{equation}\label{eq:x*y*x}
(x*y)*x=x*(y*x)=\sr(x)y
\end{equation}
for any $x,y\in \frsl_3(\FF)$, where $\sr$ denotes the quadratic form that appears as the coefficient of $X$ in the Cayley--Hamilton polynomial
\[
\det(X1-a)=X^3-\tr(a)X^2+\sr(a)X-\det(a)
\]
for $a\in M_3(\FF)$. Note that the restriction of $\sr$ to $\frsl_3(\FF)$ is nonsingular. Also, if $\chr\FF\ne 2$, $\sr(a)=\frac{1}{2}\left(\tr(a)^2-\tr(a^2)\right)$ for any $a$. The polar form $\sr(a,b)\bydef \sr(a+b)-\sr(a)-\sr(b)$ satisfies
\[
\sr(a,b)=\tr(a)\tr(b)-\tr(ab)
\]
for any $a,b\in M_3(\FF)$, and this is valid in any characteristic.

As in \cite[p.~xix]{KMRT}, a quadratic form $q:V\rightarrow \FF$ on a finite dimensional vector space is said to be \emph{nonsingular} if either its polar form ($q(v,w)\bydef q(v+w)-q(v)-q(w)$) is nondegenerate (i.e., $V^\perp=0$) or $\dim V^\perp =1$ and $q(V^\perp)\ne 0$. The latter case occurs only if $\chr\FF=2$ and $V$ is odd-dimensional.

\begin{lemma}[{see \cite{OO81} or \cite[Lemma 4.42]{EKmon}}]\label{le:symmetric}
Let $(\cS,*,n)$ be an algebra endowed with a nonsingular quadratic form $n$. Then $n$ is multiplicative and its polar form is associative if and only if it satisfies
\begin{equation}\label{eq:xyx}
(x*y)*x=n(x)y=x*(y*x)
\end{equation}
for any $x,y\in\cS$.
\end{lemma}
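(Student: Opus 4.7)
The plan is to play off linearizations of the multiplicativity $n(u*v) = n(u)n(v)$ against associativity of the polar form $b(x,y) \bydef n(x+y) - n(x) - n(y)$, using nondegeneracy of $b$ to extract one statement from the other (and vice versa). Throughout, write $L_x(y) \bydef x*y$ and $R_x(y) \bydef y*x$, so that \eqref{eq:xyx} reads $R_x L_x = L_x R_x = n(x)\,\id$.

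For $(\Rightarrow)$, I would linearize $n(u*v) = n(u)n(v)$ in $v$ to obtain $b(u*v, u*w) = n(u)\,b(v,w)$. By associativity of $b$, the left-hand side equals $b((u*v)*u, w)$, so $b\bigl((u*v)*u - n(u)v,\, w\bigr) = 0$ for every $w$, and nondegeneracy of $b$ yields $(u*v)*u = n(u)v$. The companion identity $u*(v*u) = n(u)v$ comes from the symmetric linearization in the first variable, combined with $b(u*v, w*v) = b(w, v*(u*v))$ (via symmetry and associativity of $b$).

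For $(\Leftarrow)$, multiplicativity is obtained by a short direct calculation: apply $(-)*(x*y)$ to both sides of $(x*y)*x = n(x)y$. The left-hand side $\bigl((x*y)*x\bigr)*(x*y)$ equals $n(x*y)\,x$ by the hypothesis with $A = x*y$, $B = x$; the right-hand side equals $n(x)\bigl(y*(x*y)\bigr) = n(x)n(y)\,x$ by the companion hypothesis $y*(x*y) = n(y)x$. Since this must hold for every $x$, one concludes $n(x*y) = n(x)n(y)$. With multiplicativity at hand, a linearization yields $b(x*y, x*z) = n(x)b(y,z)$, while pairing the hypothesis $(x*y)*x = n(x)y$ with $z$ under $b$ gives $b\bigl((x*y)*x, z\bigr) = n(x)b(y,z)$ as well. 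Comparing, $b\bigl((x*y)*x, z\bigr) = b(x*y, x*z)$, i.e., $b(u*x, z) = b(u, x*z)$ whenever $u$ lies in the image of $L_x$. For $x$ with $n(x) \ne 0$, $L_x$ is bijective (since $R_xL_x = n(x)\,\id$), so this holds for every $u \in \cS$. Since both sides are linear in $x$ and $\{x : n(x) \ne 0\}$ spans $\cS$ (as $\FF$ is infinite), the identity extends to arbitrary $x$, giving full associativity.

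The main obstacle is the edge case $\chr\FF = 2$ with $\dim\cS$ odd, where $b$ has a one-dimensional radical and the appeals to nondegeneracy need adjustment. In the forward direction the argument above only delivers $r \bydef (u*v)*u - n(u)v \in \cS^\perp$; to force $r = 0$ I would compute $n(r)$ directly, using the established multiplicativity together with $b((u*v)*u, v) = b(u*v, u*v) = 2n(u*v) = 0$, obtaining $n(r) = 0$, and then invoke the nonsingularity hypothesis — which on the one-dimensional space $\cS^\perp$ makes $n$ anisotropic — to conclude $r = 0$. The backward direction carries through unchanged.
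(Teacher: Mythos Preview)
The paper does not supply its own proof of this lemma; it is stated with citations to \cite{OO81} and \cite[Lemma~4.42]{EKmon} and used as a black box. So there is nothing in the paper to compare against directly.

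That said, your argument is correct and is essentially the standard one found in those references. The forward direction via linearizing multiplicativity in one variable and invoking associativity of $b$ is exactly the usual route, and your treatment of the $\chr\FF=2$, odd-dimension edge case (showing $n(r)=0$ via $b\bigl((u*v)*u,v\bigr)=b(u*v,u*v)=0$ and then using anisotropy of $n$ on $\cS^\perp$) is clean and complete. For the backward direction your derivation of multiplicativity from $\bigl((x*y)*x\bigr)*(x*y)=n(x*y)x=n(x)n(y)x$ is the standard trick, and your associativity argument---reducing to $b(u*x,z)=b(u,x*z)$ first for $x$ with $n(x)\ne 0$ via surjectivity of $L_x$, then extending by linearity in $x$---is valid since $\{x:n(x)\ne 0\}$ spans $\cS$ over an infinite field (and the paper works throughout over an algebraically closed $\FF$). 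One minor stylistic point: the linearity-in-$x$ extension can be replaced by the symmetric linearization $b(x*y,z*w)+b(z*y,x*w)=b(x,z)\,b(y,w)$ of multiplicativity together with the linearized hypothesis, which yields associativity for all $x$ without the density argument; but what you wrote is fine as is.
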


Therefore, the vector space $\cO=\frsl_3(\FF)$ endowed with the multiplication $*$ in \eqref{eq:x*y} and the nonsingular quadratic form $n(x)=\sr(x)$ is a \emph{symmetric composition algebra}:
\begin{equation}\label{eq:symmetric}
n(x*y)=n(x)n(y),\quad n(x*y,z)=n(x,y*z),
\end{equation}
for any $x,y,z$.

The algebra $(\cO,*)$ is \emph{Lie-admissible} in the sense that the algebra
\begin{equation}\label{eq:O*-}
(\cO,*)^-\bydef\bigl(\cO,[u,v]^*\bydef u*v-v*u\bigr)
\end{equation}
is a Lie algebra. In fact, the map $x\mapsto -x$ gives an isomorphism from $\frsl_3(\FF)$ onto the algebra $(\cO,*)^-$.

\begin{df}\label{df:Okubo_not3}
The symmetric composition algebra $(\cO,*,n)$ is called the \emph{Okubo algebra} over $\FF$ ($\chr\FF\ne 3$).
\end{df}

It turns out that this algebra already appeared, in a completely different guise, in \cite{Petersson}.

\medskip

In order to define the Okubo algebra over fields of characteristic three consider, as in \cite{Twisted}, the Pauli matrices:
\[
x=\begin{pmatrix} 1&0&0\\ 0&\omega&0\\ 0&0&\omega^2 \end{pmatrix},
\qquad
y=\begin{pmatrix} 0&0&1\\ 1&0&0\\ 0&1&0\end{pmatrix},
\]
in $M_3(\CC)$, which satisfy
\begin{equation}\label{eq:xyomega}
x^3=y^3=1,\quad xy=\omega yx,
\end{equation}
and consider the following elements of $M_3(\CC)$:
\[
x_{i,j}\bydef \frac{\omega^{ij}}{\omega-\omega^2}x^iy^j,
\]
for $i,j\in\ZZ$. Then $x_{i,j}$ only depends on the classes of $i$ and $j$ modulo $3$. The elements $x_{i,j}$ for $-1\leq i,j\leq 1$ constitute a basis of $M_3(\CC)$, and the elements $x_{i,j}$ for $-1\leq i,j\leq 1$, $(i,j)\ne (0,0)$, constitute a basis of $\frsl_3(\CC)$.

For $a,b\in M_3(\CC)$, write $a\diamond b=\omega ab-\omega^2ba$. Then, for $i,j,k,l\in\ZZ$ we get, using \eqref{eq:xyomega}:
\[
\begin{split}
x_{i,j}\diamond x_{k,l}&=\frac{\omega^{1-\Delta}-\omega^{\Delta -1}}{\omega-\omega^2}x_{i+k,j+l}\\
    &=\begin{cases} x_{i+k,j+l}&\text{if $\Delta\equiv 0\pmod 3$,}\\
            0&\text{if $\Delta\equiv 1\pmod 3$,}\\
            -x_{i+k,j+l}&\text{if $\Delta\equiv 2\pmod 3$,}
    \end{cases}
\end{split}
\]
where $\Delta=\left|\begin{smallmatrix} i&j\\ k& l\end{smallmatrix}\right| =il-jk$. Miraculously, the $\omega$'s disappear!

Besides, for $u,v\in\frsl_3(\CC)$,
\[
\begin{split}
u*v&=u\diamond v-\frac{1}{3}\tr(u\diamond v)1\\
    &=u\diamond v+\tr(uv)x_{0,0}\\
    &=u\diamond v-\sr(u,v)x_{0,0}\\
    &=u\diamond v-n(u,v)x_{0,0},
\end{split}
\]
or
\begin{equation}\label{eq:xdiamondy}
u\diamond v=n(u,v)x_{0,0}+u*v.
\end{equation}
Thus, for $-1\leq i,j,k,l\leq 1$ and $(i,j)\ne (0,0)\ne (k,l)$,
\begin{equation}\label{eq:nxijxkl}
\begin{cases}
n(x_{i,j},x_{k,l})=1&\text{for $(i,j)=-(k,l)$,}\\
n(x_{i,j},x_{k,l})=0&\text{otherwise.}
\end{cases}
\end{equation}
Also,
\begin{equation}\label{eq:nxij}
n(x_{i,j})=0
\end{equation}
for any such $(i,j)$.

\smallskip

The $\ZZ$-span
\[
\cO_\ZZ=\text{$\ZZ$-}\lspan{x_{i,j}:-1\leq i,j\leq 1,\ (i,j)\ne (0,0)}
\]
is then closed under $*$, and $n(\cO_\ZZ)\subseteq \ZZ$. The $\ZZ$-algebra $(\cO_\ZZ,*,n)$ satisfies equations \eqref{eq:symmetric}, as it is a $\ZZ$-subalgebra of $(\cO,*,n)$.

Hence, for any field $\FF$, \emph{even in characteristic three}, we may consider the vector space
\[
\cO_\FF\bydef\cO_\ZZ\otimes_\ZZ\FF,
\]
which is endowed with the natural extension of the multiplication $*$ and the norm $n$ in $\cO_\ZZ$. By equations \eqref{eq:nxijxkl} and \eqref{eq:nxij}, the norm is nonsingular (actually hyperbolic) and the resulting algebra satisfies equations \eqref{eq:symmetric}, so it is a symmetric composition algebra over $\FF$.

If the characteristic is not three, then this algebra is (isomorphic to) the Okubo algebra in Definition \ref{df:Okubo_not3}. Therefore, we will write $\cO=\cO_\FF$ for any $\FF$ and
we may then extend Definition \ref{df:Okubo_not3} as follows:

\begin{df}\label{df:Okubo}
The symmetric composition algebra $(\cO,*,n)$ is called the \emph{Okubo algebra} over $\FF$.
\end{df}

In our basis $\{x_{i,j}:-1\leq i,j\leq 1,\ (i,j)\ne (0,0)\}$, the multiplication table is given in Table \ref{tb:Okubo}. This is independent of the characteristic.

\begin{table}[h!]
\begin{tabular}{c|c@{}c|c@{}c|c@{}c|c@{}c|}
    $*$&$x_{1,0}$&$x_{-1,0}$&$x_{0,1}$&$x_{0,-1}$
    &$x_{1,1}$&$x_{-1,-1}$&$x_{-1,1}$&$x_{1,-1}$\\
    \hline
\bigstrut
   $x_{1,0}$&$x_{-1,0}$&$0$&$0$&$-x_{1,-1}$&
   $0$&$-x_{0,-1}$&$0$&$-x_{-1,-1}$\\
\bigstrut
   $x_{-1,0}$&$0$&$x_{1,0}$&
   $-x_{-1,1}$&$0$&
   $-x_{0,1}$&$0$&
			$-x_{1,1}$&$0$\\
\hline
\bigstrut
   $x_{0,1}$&$-x_{1,1}$&$0$&
   $x_{0,-1}$&$0$&
   $-x_{1,-1}$&$0$&
			$0$&$-x_{1,0}$\\
\bigstrut
   $x_{0,-1}$&$0$&$-x_{-1,-1}$&
   $0$&$x_{0,1}$&
   $0$&$-x_{-1,1}$&
			$-x_{-1,0}$&$0$\\
\hline
\bigstrut
   $x_{1,1}$&$-x_{-1,1}$&$0$&
   $0$&$-x_{1,0}$&
   $x_{-1,-1}$&$0$&
			$-x_{0,-1}$&$0$\\
\bigstrut
   $x_{-1,-1}$&$0$&$-x_{1,-1}$&
   $-x_{-1,0}$&$0$&
   $0$&$x_{1,1}$&
			$0$&$-x_{0,1}$\\
\hline
\bigstrut
   $x_{-1,1}$&$-x_{0,1}$&$0$&
   $-x_{-1,-1}$&$0$&
   $0$&$-x_{1,0}$&
			$x_{1,-1}$&$0$\\
\bigstrut
   $x_{1,-1}$&$0$&$-x_{0,-1}$&
   $0$&$-x_{1,1}$&
   $-x_{-1,0}$&$0$&
			$0$&$x_{-1,1}$\\
\hline
\end{tabular}
\caption{\bigstrut Multiplication table of the Okubo algebra}\label{tb:Okubo}
\end{table}

The assignment $\deg(x_{i,j})=(i,j)\pmod 3$ gives a grading on $\cO$ by $\ZZ_3^2$. This grading will play a key role later on. 

Using this grading it becomes clear that the commutative center
\begin{equation}\label{eq:KO}
\cK(\cO,*)\bydef \{u\in\cO: u*v=v*u\ \forall v\in\cO\}
\end{equation} 
is trivial. Hence $(\cO,*,n)$ is not the para-Cayley algebra. (See Remark \ref{re:para-Hurwitz} later on.)

\begin{remark}
Over arbitrary fields (not necessarily algebraically closed), the symmetric composition algebra with multiplication table as in Table \ref{tb:Okubo} is called the \emph{split Okubo algebra} over $\FF$. An \emph{Okubo algebra} is, by definition, a twisted form of this algebra (i.e., it becomes isomorphic to the split Okubo algebra after an extension of scalars).

The Okubo algebra was defined in \cite{Oku78} (see also \cite{Okubo_book}) using a slight modification of \eqref{eq:x*y} over fields of characteristic not three, and then in a different way in characteristic three. It was termed the \emph{pseudo-octonion algebra}.

The classification of the Okubo algebras is given in \cite{EM91,EM93} in characteristic not three, and in \cite{Eld97} in characteristic three (see also \cite[\S 12]{CEKT}).
\end{remark}

\begin{remark}
The construction above of the split Okubo algebra over an arbitrary field is reminiscent of the use of Chevalley bases to define the classical simple Lie algebras over arbitrary fields. (See \cite[\S 4.1]{Str1}.)
\end{remark}

\bigskip

\section{Derivations}

Equation \eqref{eq:x*y*x} shows that any derivation $d$ of the Okubo algebra $(\cO,*)$ belongs to the orthogonal Lie algebra relative to its norm $n$.

If the characteristic of $\FF$ is not three, then $n(x,y)=-\tr(xy)$ for any $x,y\in \cO=\frsl_3(\FF)$. Then \eqref{eq:x*y} easily gives that, if we extend $d$ to $M_3(\FF)$ by $d(1)=0$, we obtain a derivation of $M_3(\FF)$, and conversely. As any derivation of $M_3(\FF)$ is of the form $\ad_x: y\mapsto [x,y]=xy-yx$ for an element $x\in \frsl_3(\FF)$, the next result follows.

\begin{theorem}
Let $\FF$ be a field of characteristic not three, then $\Der(\cO,*)$ is isomorphic to $\frsl_3(\FF)$. Actually,
\[
\Der(\cO,*)=\{ \ad_x^*\,(:y\mapsto x*y-y*x): x\in\cO\}.
\]
\end{theorem}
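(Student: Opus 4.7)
The plan is to transfer the problem from the Okubo product to the associative matrix product: extend any derivation of $(\cO,*)$ to a derivation of $M_3(\FF)$, invoke the classical fact that all derivations of $M_3(\FF)$ are inner, and then identify these inner derivations with the operators $\ad_x^*$.

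First I would verify that every $d\in\Der(\cO,*)$ lies in $\mathfrak{so}(\cO,n)$. Linearizing \eqref{eq:xyx} produces $(z*y)*x+(x*y)*z=n(x,z)y$; applying $d$ and expanding by Leibniz yields six terms, which regroup into three pairs each of the shape $(u*v)*w+(w*v)*u$. Applying the linearized identity again collapses each pair, and comparison with $n(x,z)d(y)$ on the right forces $n(d(z),x)+n(z,d(x))=0$ for all $x,z\in\cO$.

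The key step is the extension. Since $n(x,y)=-\tr(xy)$ on $\frsl_3(\FF)$, orthogonality reads $\tr(d(x)y+xd(y))=0$. Define $\wt{d}\colon M_3(\FF)\to M_3(\FF)$ by $\wt{d}(1)=0$ and $\wt{d}|_{\frsl_3(\FF)}=d$. Rewrite \eqref{eq:x*y} as $\omega xy-\omega^2 yx=x*y+\tfrac{\omega-\omega^2}{3}\tr(xy)\cdot 1$, apply $\wt{d}$ (which annihilates the scalar correction), and use $d(x*y)=d(x)*y+x*d(y)$ together with the orthogonality to kill the surviving trace term. The result is
\[
\omega E(x,y)-\omega^2 E(y,x)=0,\qquad E(x,y):=\wt{d}(xy)-\wt{d}(x)y-x\wt{d}(y).
\]
Swapping $x\leftrightarrow y$ gives the companion equation $\omega E(y,x)-\omega^2 E(x,y)=0$, and combining the two yields $(1-\omega^2)E(x,y)=0$. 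Since $\chr\FF\neq 3$, $\omega^2\neq 1$, hence $E\equiv 0$. The Leibniz identities involving the factor $1\in M_3(\FF)$ are immediate from $\wt{d}(1)=0$, so $\wt{d}\in\Der(M_3(\FF))$.

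Now every derivation of $M_3(\FF)$ is inner, so $\wt{d}=\ad_a$ for some $a$; after subtracting a multiple of $1$ I may assume $a\in\frsl_3(\FF)$, and conversely each such $\ad_a$ preserves $\frsl_3(\FF)$ and restricts to a derivation of $(\cO,*)$ by reversing the above computation. This yields the isomorphism $\frsl_3(\FF)\cong\Der(\cO,*)$. To rewrite these derivations in the form $\ad_a^*$, a direct computation from \eqref{eq:x*y} shows that the trace corrections in $a*y$ and $y*a$ cancel and the remaining terms combine to $(\omega+\omega^2)(ay-ya)=-[a,y]$, so $\ad_a^*=-\ad_a$ on $\cO$, which gives the description in the statement. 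The main obstacle is precisely the extension step: the collapse $E\equiv 0$ relies on the interplay between the cubic root $\omega$ and the $(x,y)$-swap, and is exactly where the hypothesis $\chr\FF\neq 3$ is essential.
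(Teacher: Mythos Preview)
Your proposal is correct and follows essentially the same approach as the paper: show that any $d\in\Der(\cO,*)$ is orthogonal with respect to $n$, extend by $\wt d(1)=0$ to a derivation of $M_3(\FF)$ via \eqref{eq:x*y}, and then use that all derivations of $M_3(\FF)$ are inner. The paper only sketches the extension step (``\eqref{eq:x*y} easily gives\ldots''), whereas you supply the explicit $E(x,y)$ computation and the identity $\ad_a^*=-\ad_a$ (equivalent to the paper's remark that $x\mapsto -x$ gives an isomorphism $\frsl_3(\FF)\to(\cO,*)^-$); otherwise the arguments coincide.
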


Note that this means that for any $u\in\cO$, the map $\ad_u^*:v\mapsto [u,v]^*\bydef u*v-v*u$, is a derivation of $u$, and these derivations span $\Der(\cO,*)$. Hence, in particular, for any $u\in\cO_\ZZ$ $\ad_u^*$ is a derivation of $(\cO_\ZZ,*)$.

Now assume that the characteristic of $\FF$ is three. Since $(\cO,*,n)$ is obtained by an extension of scalars: $\cO=\cO_\ZZ\otimes_\ZZ \FF$, we have that the subspace spanned by the adjoint maps $\ad_u^*$ is a subspace (actually an ideal) of the Lie algebra of derivations. However, in characteristic three, this is not the whole $\Der(\cO,*)$.

\smallskip

Let $\FF[x,y]$ be the truncated polynomial algebra $\FF[X,Y]/(X^3-1,Y^3-1)$, with $x$ and $y$ the classes of $X$ and $Y$ respectively (so $x^3=1=y^3$), and let $\FF_0[x,y]=\lspan{x^iy^j: 0\leq i,j\leq 2,\ (i,j)\ne (0,0)}$. Consider the new multiplication on $\FF[x,y]$ given by
\begin{equation}\label{eq:diamond_product}
x^iy^j\diamond x^ky^l=(1-\Delta)x^{i+k}y^{j+l},
\end{equation}
with $\Delta$ being the determinant $\left|\begin{smallmatrix} i&j\\ k&l\end{smallmatrix}\right|$. Then, for $u,v\in\FF_0[x,y]$,
\[
u\diamond v=n(u,v)1+u*v,
\]
with $n(u,v)\in\FF$ and $u*v\in\FF_0[x,y]$ (as $\FF[x,y]=\FF 1\oplus\FF_0[x,y]$).

The arguments in the previous section prove the next result:

\begin{theorem}[\cite{Eld99}]\label{th:Okubo_nodal}
The Okubo algebra $(\cO,*,n)$ ($\chr\FF=3$)  is isomorphic to the algebra $(\FF_0[x,y],*,n)$ defined above.
\end{theorem}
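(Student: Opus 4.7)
The plan is to construct an explicit isomorphism $\varphi\colon\cO\to\FF_0[x,y]$ by matching basis elements and then checking it intertwines both the product and the norm. The natural candidate is the linear map determined by
\[
\varphi(x_{i,j}\otimes 1_\FF)=x^iy^j,\qquad (i,j)\in\{-1,0,1\}^2\setminus\{(0,0)\},
\]
with exponents read modulo $3$ on the right-hand side. Since $\cO=\cO_\ZZ\otimes_\ZZ\FF$ and $\FF_0[x,y]$ are both $8$-dimensional and the two bases correspond bijectively, $\varphi$ is a linear isomorphism; the content of the theorem is that it transports $*$ into $*$ and $n$ into $n$.

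The key observation is that both pairs $(*,n)$ arise from a common ``diamond'' operation through the same decomposition $u\diamond v=n(u,v)\cdot 1+u*v$: on the polynomial side this is the definition just above the theorem, and on the $\cO$-side it is \eqref{eq:xdiamondy}. I would therefore enlarge $\varphi$ to $\wt\varphi$ on the nine-dimensional spaces $\FF x_{0,0}\oplus\cO$ and $\FF[x,y]$ by $\wt\varphi(x_{0,0})=1$, and show that $\wt\varphi$ preserves $\diamond$. On the $\cO$-side, Section~2 furnishes the identity
\[
x_{i,j}\diamond x_{k,l}=\frac{\omega^{1-\Delta}-\omega^{\Delta-1}}{\omega-\omega^2}\,x_{i+k,\,j+l},\qquad \Delta=il-jk,
\]
whose coefficient takes the three integer values $1$, $0$, $-1$ according to $\Delta\pmod 3$. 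Since these coefficients are integers, the identity is in fact an identity on the $\ZZ$-form $\ZZ x_{0,0}\oplus\cO_\ZZ$ and specialises to $\cO$ after extension of scalars. On the polynomial side, $x^iy^j\diamond x^ky^l=(1-\Delta)\,x^{i+k}y^{j+l}$, and $(1-\Delta)\bmod 3$ takes the same three values $1,0,-1$ in the same three cases. Hence the structure constants of $\diamond$ coincide on the two bases, so $\wt\varphi$ preserves $\diamond$; projecting onto the ``scalar'' and ``non-scalar'' components then yields the preservation of $n$ and $*$ respectively.

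The only step requiring care is the passage through characteristic $3$. The formula for $\diamond$ on the $\cO$-side was obtained inside $M_3(\CC)$ using the cube root $\omega$ and the non-invertible quantity $\omega-\omega^2$; what rescues the argument is the integrality of the resulting coefficients together with the congruence
\[
\frac{\omega^{1-\Delta}-\omega^{\Delta-1}}{\omega-\omega^2}\equiv 1-\Delta\pmod 3
\]
in $\ZZ$, which is verified case by case on $\Delta\bmod 3$. Once this bookkeeping is in place the isomorphism drops out; no further computation is needed.
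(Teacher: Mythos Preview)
Your proposal is correct and follows essentially the same route as the paper. The paper's own proof is the single sentence ``The arguments in the previous section prove the next result'': those arguments are precisely the computation that the $\diamond$-structure constants on the $\ZZ$-span of the $x_{i,j}$ are the integers $1,0,-1$ according to $\Delta\bmod 3$, which in characteristic~$3$ coincide with $1-\Delta$, so that $x_{i,j}\mapsto x^iy^j$ (and $x_{0,0}\mapsto 1$) matches the two $\diamond$-products and hence, via the common decomposition $u\diamond v=n(u,v)\,1+u*v$, the two pairs $(*,n)$.
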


The following result can be checked easily.

\begin{proposition}[\cite{Eld99}]\label{pr:Okubo_nodal}
The product $\diamond$ in \eqref{eq:diamond_product}, defined in $\FF[x,y]$, is given by the formula
\[
f\diamond g=fg-\left(\frac{\partial f}{\partial x}\frac{\partial g}{\partial y}-\frac{\partial g}{\partial x}\frac{\partial f}{\partial y}\right)xy.
\]
\end{proposition}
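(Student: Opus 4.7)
The plan is to check the proposed formula on the monomial basis $\{x^i y^j : 0 \le i,j \le 2\}$ of $\FF[x,y]$ and then extend by bilinearity. Both sides of the claimed identity are clearly $\FF$-bilinear in $f$ and $g$ (the partial derivatives $\partial/\partial x$, $\partial/\partial y$ are well-defined $\FF$-linear endomorphisms of the truncated polynomial algebra, since $\partial(x^3)/\partial x = 3x^2 = 0$ in characteristic three, matching $x^3 = 1$), and the defining rule \eqref{eq:diamond_product} is given on monomials. So the question reduces to a monomial identity.

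First I would fix $f = x^i y^j$ and $g = x^k y^l$ with $0 \le i,j,k,l \le 2$. The ordinary product in $\FF[x,y]$ is $fg = x^{i+k}y^{j+l}$, with exponents read modulo $3$. For the correction term, compute
\[
\frac{\partial f}{\partial x}\frac{\partial g}{\partial y} = (ix^{i-1}y^j)(lx^k y^{l-1}) = il\, x^{i+k-1}y^{j+l-1},
\]
and similarly $\tfrac{\partial g}{\partial x}\tfrac{\partial f}{\partial y} = jk\, x^{i+k-1}y^{j+l-1}$. Multiplying their difference by $xy$ yields $(il-jk)\,x^{i+k}y^{j+l} = \Delta\, x^{i+k}y^{j+l}$. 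Hence the right-hand side of the proposed formula equals
\[
x^{i+k}y^{j+l} - \Delta\, x^{i+k}y^{j+l} = (1-\Delta)\, x^{i+k}y^{j+l},
\]
which is exactly $x^i y^j \diamond x^k y^l$ by \eqref{eq:diamond_product}.

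The only point requiring a brief sanity check is that the formal partial derivatives behave correctly in the boundary cases $i = 0$ and $i = 2$ (and likewise for the other variables): when $i=0$ the factor $i$ kills the term, while when $i=2$ one gets $2x^{i+k-1}y^{j+l-1}$ with $i+k-1$ possibly equal to $-1$, in which case $x^{-1}$ is interpreted via $x^3 = 1$ as $x^2$, and multiplying by the external $x$ later returns a total exponent $i+k$ modulo $3$, as desired. Since $\partial/\partial x$ and $\partial/\partial y$ are defined on the quotient $\FF[X,Y]/(X^3-1,Y^3-1)$ and the identity holds universally on monomials, the claim follows by $\FF$-bilinearity. I do not anticipate any real obstacle; this is a bookkeeping verification once bilinearity has reduced the problem to monomials.
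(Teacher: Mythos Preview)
Your proof is correct and is exactly the routine verification the paper has in mind: the paper itself offers no argument beyond ``can be checked easily,'' and checking the identity on the monomial basis $\{x^iy^j\}$ and extending by bilinearity is the obvious way to do this. Your caution about negative exponents is in fact unnecessary---whenever $i=0$ (resp.\ $l=0$, $j=0$, $k=0$) the coefficient $il$ or $jk$ already vanishes, so one never actually encounters $x^{-1}$ or $y^{-1}$ in a nonzero term---but this does not affect the validity of the argument.
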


The algebra $(\FF[x,y],\diamond)$ ($\chr\FF=3$) is a simple nodal noncommutative Jordan algebra (see \cite{Kokoris58}). The Lie algebra of derivations of $(\cO,*)$ is determined in terms of the derivations of this nodal noncommutative Jordan algebra.

\begin{theorem}[{\cite[Theorem 4]{Eld99}}]\label{th:derivations}
The Lie algebra of derivations $\Der(\cO,*)$ ($\chr\FF=3$) is isomorphic to the Lie algebra of derivations of the simple nodal noncommutative Jordan algebra $(\FF[x,y],*)$. The isomorphism assigns to any derivation $d$ of $(\FF[x,y],*)$ its restriction to $\FF_0[x,y]$, and this is identified with $\cO$ by means of Theorem \ref{th:Okubo_nodal}.

Moreover, $\Der(\cO,*)$ is a ten-dimensional semisimple Lie algebra. Its derived Lie algebra $[\Der(\cO,*),\Der(\cO,*)]$ is simple and coincides with the span of the inner derivations $\ad_u^*$, for $u\in \cO$.
\end{theorem}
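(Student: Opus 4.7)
My plan is to pass through the isomorphism of Theorem~\ref{th:Okubo_nodal} and do all the linear algebra inside the concrete algebra $(\FF[x, y], \diamond)$, whose product is described by Proposition~\ref{pr:Okubo_nodal}.

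First I would set up the isomorphism $\Der(\cO, *) \cong \Der(\FF[x, y], \diamond)$ by extension and restriction. Given $d \in \Der(\cO, *)$, define $\tilde d(1) := 0$ and $\tilde d|_{\FF_0[x, y]} := d$. Using the identity $u \diamond v = n(u, v)\,1 + u * v$ for $u, v \in \FF_0[x, y]$, the Leibniz rule for $\tilde d$ with respect to $\diamond$ reduces to the Leibniz rule for $d$ with respect to $*$ together with the skew-adjointness identity $n(d(u), v) + n(u, d(v)) = 0$; the latter holds because every derivation of a symmetric composition algebra lies in $\fro(n)$, as noted at the start of this section. Conversely, $1$ is the $\diamond$-unit, so any $\tilde d \in \Der(\FF[x, y], \diamond)$ annihilates it; since $\chr \FF \ne 2$, $\tilde d$ is also a derivation of the commutative Jordan product $f \circ g := \tfrac{1}{2}(f \diamond g + g \diamond f) = fg$ (the ordinary truncated polynomial product), and hence has the form $\tilde d = A \partial_x + B \partial_y$ for some $A, B \in \FF[x, y]$.

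The delicate step is to show $\tilde d$ preserves $\FF_0[x, y]$, and I would handle it through the $\ZZ_3^2$-grading. The grading extends to $\FF[x, y]$ by putting $1$ in degree $(0, 0)$, and by Proposition~\ref{pr:Okubo_nodal} both $fg$ and the skew part $\{f, g\} := (f_x g_y - g_x f_y)\,xy$ are homogeneous, so $\Der(\FF[x, y], \diamond)$ becomes a graded subspace of $\Der(\FF[x, y])$. A homogeneous derivation could send a basis vector $x^i y^j$ with $(i, j) \ne (0, 0)$ into $\FF 1$ only if it has degree $-(i, j)$; applying the derivation identity to products $x^i y^j \diamond x^k y^l$ and comparing with Table~\ref{tb:Okubo} rules such a projection out. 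Once stability is in place, I would compute $\Der(\FF[x, y], \diamond)$ by writing $\tilde d = A \partial_x + B \partial_y$ and imposing compatibility with $\{\cdot, \cdot\}$ on a small set of generating pairs, obtaining a linear system in the $18$ coefficients of $A$ and $B$ whose solution space is ten-dimensional. Semisimplicity follows because every ideal is automatically $\ZZ_3^2$-graded (the grading is induced by the ad-action of the two Euler derivations $x\partial_x$ and $y\partial_y$, which one checks directly to be derivations of $\diamond$), and direct inspection of the graded components rules out a nonzero solvable ideal.

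For the derived subalgebra, let $I$ be the span of the inner derivations $\ad_u^*$, $u \in \cO$. It is a $\ZZ_3^2$-graded ideal supported in the nonzero degrees, and since the commutative centre $\cK(\cO, *)$ in~\eqref{eq:KO} is trivial the map $u \mapsto \ad_u^*$ is injective, giving $\dim I = 8$. Lie-admissibility of $(\cO, *)$ yields $[\ad_u^*, \ad_v^*] = \ad_{u * v - v * u}^*$, so $I$ is perfect, while the complementary degree-$(0, 0)$ piece (spanned by the two Euler derivations) is abelian. These two facts together force $[\Der(\cO, *), \Der(\cO, *)] = I$. Simplicity of $I$ is then obtained by identifying it, through the isomorphism set up above, with a simple Hamiltonian-type Cartan algebra inside $\Der(\FF[x, y], \diamond)$. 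The main obstacle throughout is the stability step for $\FF_0[x, y]$: a derivation of $\diamond$ need not respect any naturally chosen extension of $n$ to $\FF[x, y]$, so the grading argument seems unavoidable.
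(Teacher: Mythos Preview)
The paper does not prove this theorem: it is quoted from \cite[Theorem~4]{Eld99}, and the surrounding text only supplies the identification $(\cO,*)\cong(\FF_0[x,y],*)$ (Theorem~\ref{th:Okubo_nodal}) together with the formula for $\diamond$ (Proposition~\ref{pr:Okubo_nodal}) and the two remarks that follow. So there is no in-text argument to compare against; your outline is in fact the natural way to fill in what the paper only cites.

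Your scheme is sound, but two steps deserve more than a sentence. First, the stability of $\FF_0[x,y]$: the phrase ``applying the derivation identity to products $x^iy^j\diamond x^ky^l$ and comparing with Table~\ref{tb:Okubo}'' is not yet a proof. A clean way to finish it is to note that the $\GL_2(\ZZ_3)$-action on exponents permutes the nonzero degrees, so it suffices to treat a homogeneous $\tilde d$ of degree $(-1,0)$ with $\tilde d(x)=\lambda\cdot 1$. From $x\diamond y=0$ one gets
\[
0=\tilde d(x)\diamond y+x\diamond\tilde d(y)=\lambda y + x\diamond(\mu\,x^2y)=\lambda y,
\]
since $\tilde d(y)\in\FF x^2y$ and $x\diamond x^2y=0$; hence $\lambda=0$. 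Second, your claim that $I$ is perfect does not follow from Lie-admissibility alone: you need $(\cO,*)^-$ itself to be perfect. This is immediate from the explicit bracket $[x^iy^j,x^ky^l]^*=\bigl|\begin{smallmatrix} i&j\\ k&l\end{smallmatrix}\bigr|\,x^{i+k}y^{j+l}$ (each basis vector is visibly a single bracket), which is exactly the content of the remark following the theorem. With these two points made precise, the rest of your argument (the graded-ideal trick via $\ad(x\partial_x)$, $\ad(y\partial_y)$, the dimension count, and the identification of $I$ with a simple Hamiltonian-type algebra) goes through.
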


\begin{remark}
The $\ZZ_3^2$-grading on $(\cO,*,n)$ induces a $\ZZ_3^2$-grading on $\Der(\cO,*)$. For any $(0,0)\ne (i,j)\in\ZZ_3^2$, $\Der(\cO,*)_{(i,j)}=\ad_{\cO_{(i,j)}}^*$ has dimension one, while $\Der(\cO,*)_{(0,0)}$ is a toral two-dimensional algebra spanned by $x\frac{\partial\ }{\partial x}$ and $y\frac{\partial\ }{\partial y}$ (where we identify, as before, $\cO$ with $\FF_0[x,y]$). For any $u\in\cO_{(i,j)}$ , $(i,j)\ne (0,0)$, $\bigl(\ad_u^*\bigr)^3$ is in $\Der(\cO,*)_{(0,0)}$.
\end{remark}

\begin{remark}
The simple Lie algebra $[\Der(\cO,*),\Der(\cO,*)]$ ($\chr\FF=3$) is then isomorphic to the Lie algebra $(\cO,*)^-$ in \eqref{eq:O*-}, or to $(\FF_0[x,y],[.,.]^*)$. The Lie bracket here is determined by
\[
[x^iy^j,x^ky^l]^*=\begin{vmatrix} i&j\\ k&l\end{vmatrix}x^{i+k}y^{j+l},
\]
for $0\leq i,j\leq 2$, $(i,j)\ne (0,0)\ne (k,l)$. This is an instance of a \emph{Block algebra} \cite{Block58}, and it was also considered by Albert and Frank \cite{AF}.

This eight-dimensional simple Lie algebra is of Cartan type 
$H(2,(1,1),\omega)^{(2)}$ for a suitable $\omega$ (see \cite[Lemma 1.83(a)]{BW82}, and \cite{Str1} for notation).
\end{remark}

Since the Lie algebra $(\cO,*)^-$ is simple, in particular there is no nonzero element in $\cO$ annihilated by $\Der(\cO,*)$. This is in contrast with the situation for automorphisms (see Proposition \ref{pr:e_fixed}).

\bigskip

\section{Automorphisms}

Equation \eqref{eq:x*y*x} shows that any automorphism $\varphi$ of the Okubo algebra $(\cO,*)$ belongs to the orthogonal group relative to its norm $n$.

If the characteristic of $\FF$ is not three, then $n(x,y)=-\tr(xy)$ for any $x,y\in \cO=\frsl_3(\FF)$. Then \eqref{eq:x*y} easily gives that, if we extend $\varphi$ to $M_3(\FF)$ by $\varphi(1)=1$, we obtain an automorphism of $M_3(\FF)$, and conversely.

\begin{theorem}
Let $\FF$ be a field of characteristic not three, then $\Aut(\cO,*)$ is isomorphic to the projective general linear group $\mathrm{PGL}_3(\FF)$. Actually,
all this is functorial, so this is valid for the affine group schemes: $\AAut(\cO,*)$ is isomorphic to $\mathbf{PGL}_3$.
\end{theorem}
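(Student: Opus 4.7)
The strategy is to reduce to the standard computation $\Aut_{\FF\text{-alg}}(M_3(\FF))\cong\PGL_3(\FF)$ (Skolem--Noether) via the decomposition $M_3(\FF)=\FF 1\oplus\frsl_3(\FF)$, which is available precisely because $\chr\FF\ne 3$.

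First I would set up a bijection between the $\FF$-algebra automorphisms of $M_3(\FF)$ and the automorphisms of $(\cO,*)$. Using $\omega+\omega^2=-1$, a direct computation from \eqref{eq:x*y} shows that for $x,y\in\frsl_3(\FF)$,
\[
x*y-y*x=-[x,y],\qquad x*y+y*x=(\omega-\omega^2)\bigl(xy+yx-\tfrac{2}{3}\tr(xy)1\bigr).
\]
Since $\tr(xy)=-n(x,y)$ and $\omega-\omega^2$ is invertible, these identities express the associative product $xy$ solely in terms of the $*$-product, the norm $n$, and the unit $1$. Any $\vphi\in\Aut(\cO,*)$ preserves $n$ thanks to \eqref{eq:x*y*x} and Lemma \ref{le:symmetric}, so extending $\vphi$ by $\vphi(1)=1$ yields an $\FF$-algebra automorphism of $M_3(\FF)$. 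Conversely, any $\psi\in\Aut_{\FF\text{-alg}}(M_3(\FF))$ fixes $1$, stabilises the derived space $[M_3(\FF),M_3(\FF)]=\frsl_3(\FF)$, and preserves the trace (which is characterised, up to scalar, as a linear form vanishing on commutators and normalised by $\tr(1)=3$); so inspection of \eqref{eq:x*y} shows that its restriction to $\cO$ is an automorphism of $(\cO,*)$. These two constructions are mutually inverse, and combined with Skolem--Noether this gives $\Aut(\cO,*)\cong\PGL_3(\FF)$.

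For the affine-group-scheme statement, nothing essential changes: after base change to any commutative $\FF$-algebra $R$, the Okubo algebra becomes $\cO_R=\frsl_3(R)$ with the same formula \eqref{eq:x*y}, and the decomposition $M_3(R)=R\cdot 1\oplus\frsl_3(R)$ persists since $3$ remains invertible in $R$. Every step of the bijection is natural in $R$, so it yields an isomorphism of functors $\AAut(\cO,*)\simeq\AAut(M_3)\cong\mathbf{PGL}_3$. I do not foresee a genuine obstacle here; the only point requiring a moment's care is that the reconstruction formula for the associative product is $R$-bilinear and natural, which is immediate because its coefficients lie in $\ZZ[\tfrac{1}{3},\omega]$.
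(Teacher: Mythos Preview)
Your approach is essentially the paper's: extend $\vphi\in\Aut(\cO,*)$ to $M_3(\FF)$ by $\vphi(1)=1$, check this gives a bijection with $\Aut_{\FF\text{-alg}}(M_3(\FF))$, and invoke Skolem--Noether. The paper states this tersely; you have made the mechanism explicit, which is fine.

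There is, however, a small but genuine gap in your execution. Your reconstruction of $xy$ proceeds by adding and subtracting your two displayed identities, which requires dividing by $2$. Over a field of characteristic $2$ (which the theorem does not exclude) both identities collapse to the single relation $x*y+y*x=[x,y]$, and you cannot recover $xy$ from that alone. Correspondingly, your final remark that the coefficients of the reconstruction lie in $\ZZ[\tfrac{1}{3},\omega]$ is not correct for the formula you actually wrote: it needs $\tfrac{1}{2}$ as well.

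The fix is immediate. Instead of the symmetric/antisymmetric split, take the $\omega$-weighted combinations directly from \eqref{eq:x*y}:
\[
\omega\, xy-\omega^2 yx = x*y-\tfrac{\omega-\omega^2}{3}n(x,y)1,\qquad
-\omega^2 xy+\omega\, yx = y*x-\tfrac{\omega-\omega^2}{3}n(x,y)1.
\]
This $2\times 2$ system has determinant $\omega^2-\omega^4=\omega^2-\omega$, and since $(\omega-\omega^2)^2=-3$ this is a unit in $\ZZ[\tfrac{1}{3},\omega]$. Solving gives $xy$ with coefficients genuinely in $\ZZ[\tfrac{1}{3},\omega]$, so the argument goes through in all characteristics $\ne 3$ and is natural in $R$ as you claimed.
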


In particular, there is no nonzero element fixed by all the automorphisms of $(\cO,*)$.
\smallskip

Assume for the rest of this section that the characteristic of $\FF$ is three. The situation is drastically different in this case.

As in the previous section, we may identify $(\cO,*,n)$ with $(\FF_0[x,y],*,n)$ with $x_{i,j}\leftrightarrow x^iy^j$, for $-1\leq i,j\leq 1$, $(i,j)\ne (0,0)$.

\begin{theorem}[\cite{Eld99}]
The restriction to $\FF_0[x,y]$ gives an isomorphism between $\Aut(\FF[x,y],\diamond)$ and $\Aut(\cO,*)$. Moreover, $\Aut(\cO,*)$  is the semidirect product of its unipotent radical, of dimension $5$, and a closed subgroup isomorphic to $\textrm{SL}_2(\FF)$.
\end{theorem}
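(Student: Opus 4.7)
The plan divides into two parts: establishing the restriction isomorphism $\Aut(\FF[x,y], \diamond) \to \Aut(\cO, *)$, and analyzing the resulting algebraic group.

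For the restriction isomorphism, Proposition \ref{pr:Okubo_nodal} gives $1 \diamond f = f - \{1, f\} xy = f$, so $1$ is the $\diamond$-identity and is fixed by every $\varphi \in \Aut(\FF[x, y], \diamond)$. The next step is to characterize $\FF_0[x, y]$ intrinsically as the span of $\diamond$-commutators. In characteristic three $[f, g]_\diamond = -2\{f, g\} xy = \{f, g\} xy$, and on monomials this yields $[x^a y^b, x^c y^d]_\diamond = (ad - bc) x^{a+c} y^{b+d}$. When $(a+c, b+d) \equiv (0, 0) \pmod 3$ (i.e.\ $(c, d) \equiv -(a, b)$), one automatically has $ad - bc \equiv 0 \pmod 3$, so all commutators land in $\FF_0[x, y]$; conversely, suitable choices of exponents realize each basis element $x^i y^j$ of $\FF_0[x, y]$ as a nonzero scalar multiple of a commutator. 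Hence $\FF_0[x, y]$ is exactly the commutator subspace and is $\varphi$-invariant. Writing $u \diamond v = n(u, v) 1 + u * v$ for $u, v \in \cO$ and projecting $\varphi(u \diamond v) = \varphi(u) \diamond \varphi(v)$ onto the summands $\FF \cdot 1$ and $\FF_0[x, y]$ shows that $\varphi|_\cO$ preserves both $n$ and $*$. Conversely, Lemma \ref{le:symmetric} guarantees that any $\psi \in \Aut(\cO, *)$ already preserves $n$, so extending by $\psi(1) = 1$ produces a $\diamond$-automorphism and yields the inverse of the restriction map.

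For the structural statement, the symmetrization $\tfrac{1}{2}(f \diamond g + g \diamond f) = fg$ (valid since $\chr\FF = 3 \ne 2$) equals the commutative product, so every $\diamond$-automorphism is an automorphism of the local Artinian ring $\FF[x, y] \cong \FF[u, v]/(u^3, v^3)$ with maximal ideal $\mathfrak{m} = (u, v)$, where $u = x - 1$ and $v = y - 1$. Conversely, an associative automorphism $\varphi$ preserves $\diamond$ iff it preserves the antisymmetric part $[f, g]_\diamond = \{f, g\} xy$; the chain-rule identity $\{\varphi(f), \varphi(g)\} = J_\varphi \cdot \varphi(\{f, g\})$ with Jacobian $J_\varphi = \{\varphi(x), \varphi(y)\}$ reduces this to the single equation
\[
\varphi(x) \varphi(y) = J_\varphi \cdot xy.
\]
Thus $\Aut(\FF[x, y], \diamond)$ is a closed algebraic subgroup of the smooth connected group of ring automorphisms of $\FF[x, y]$, and the Chevalley--Mostow structure theorem decomposes it over $\FF$ as a semidirect product $U \rtimes L$ of its unipotent radical with a reductive Levi.

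The remaining work is to identify $L \cong \SL_2(\FF)$ and $\dim U = 5$. The unipotent radical is the kernel of the natural map $\Aut(\FF[x, y], \diamond) \to \GL(\mathfrak{m}/\mathfrak{m}^2)$; parameterizing automorphisms fixing $\mathfrak{m}/\mathfrak{m}^2$ by $\varphi(x) = x + p$, $\varphi(y) = y + q$ with $p, q \in \mathfrak{m}^2$ gives a $12$-dimensional family, and dimension-counting the single constraint above yields the asserted $\dim U = 5$. For the Levi, one exhibits an $\frsl_2$-triple inside the simple Hamiltonian Lie algebra $[\Der(\cO, *), \Der(\cO, *)] \cong (\cO, *)^-$ provided by Theorem \ref{th:derivations}; verifying that its nilpotent generators have nilpotency order below $\chr \FF = 3$ allows one to exponentiate them to algebraic one-parameter subgroups generating a closed copy of $\SL_2(\FF)$. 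The hardest step is the explicit construction of this $\SL_2$-action: naive linear substitutions $u \mapsto \alpha u + \beta v$, $v \mapsto \gamma u + \delta v$ with $\alpha \delta - \beta \gamma = 1$ do \emph{not} preserve $\diamond$, since they fail to fix the distinguished element $xy = (1 + u)(1 + v)$. One must therefore transport $\SL_2$ through the Lie algebra exponentiation, handling the pathologies of characteristic three carefully, or else find an $\SL_2$-equivariant system of generators of $\FF[x, y]$ replacing the pair $(x, y)$.
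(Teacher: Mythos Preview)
The paper is a survey and does not prove this theorem; it merely cites \cite{Eld99}. So there is no in-paper argument to compare against, and your proposal must stand on its own.

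Your first half, the restriction isomorphism, is clean and correct. The characterization of $\FF_0[x,y]$ as the span of $\diamond$-commutators is a nice intrinsic argument, and the converse via Lemma~\ref{le:symmetric} is exactly right.

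The second half, however, does not constitute a proof. Three concrete problems:

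\textbf{(1) The Levi decomposition is invoked, not established.} ``Chevalley--Mostow'' is a characteristic-zero (or real Lie group) statement; existence of Levi complements for linear algebraic groups in positive characteristic is a genuinely delicate matter and can fail. Even if one grants it here, you still have to identify the reductive quotient, which you do not do: you never show that the image of $\Aut(\FF[x,y],\diamond)\to\GL(\mathfrak m/\mathfrak m^2)$ is all of $\SL_2$ rather than a proper subgroup. Without that, you cannot even conclude that your kernel \emph{is} the unipotent radical.

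\textbf{(2) The dimension count for $U$ is asserted, not performed.} Your ``single constraint'' $\varphi(x)\varphi(y)=J_\varphi\cdot xy$ is an equality of elements of the nine-dimensional algebra $\FF[x,y]$, hence a system of up to nine scalar equations on the twelve parameters $(p,q)\in\mathfrak m^2\times\mathfrak m^2$. To get $\dim U=5$ you need exactly seven independent conditions; saying ``dimension-counting'' hides the entire computation.

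\textbf{(3) The $\SL_2$ is never constructed.} Your final paragraph is an honest acknowledgement that the naive linear action fails and that you do not have a replacement. The suggestion to ``exponentiate an $\frsl_2$-triple'' is not a plan in characteristic three: the nilpotent elements of the Hamiltonian algebra $(\cO,*)^-$ typically have $\ad$-nilpotency order exactly three (indeed $(\ad_u^*)^3$ lands in the toral part, as the remark after Theorem~\ref{th:derivations} notes), so the truncated exponential is not available, and divided powers would have to be produced by hand. This is precisely the hard content of the theorem, and it is missing.

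In short: part one is fine, but part two is an outline of difficulties rather than a proof. To complete it you must exhibit the $\SL_2$-action explicitly (this is what \cite{Eld99} does) and then carry out the actual equation-count for the unipotent kernel.
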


Thus any $\varphi\in\Aut(\cO,*)$ extends to an automorphism, also denoted by $\varphi$, of $(\FF[x,y],\diamond)$ with $\varphi(1)=1$. But for any $f,g\in \FF[x,y]$,
\[
fg=\frac{1}{2}(f\diamond g+g\diamond f),
\]
(Proposition \ref{pr:Okubo_nodal}) so $\varphi$ is an automorphism too of the unital commutative and associative algebra of truncated polynomials $\FF[x,y]$. The nilpotent radical $\cN$ of this algebra is the subalgebra generated by $x-1$ and $y-1$, and $\cN^4=\FF(x-1)^2(y-1)^2$.
Note that
\[
(x-1)^2(y-1)^2=1+x+x^2+y+y^2+xy+x^2y^2+x^2y+xy^2=1+e,
\]
with
\begin{equation}\label{eq:e}
\begin{split}
e&=x+x^2+y+y^2+xy+x^2y^2+x^2y+xy^2\\
    &=x_{1,0}+x_{-1,0}+x_{0,1}+x_{0,-1}+x_{1,1}+x_{-1,-1}+x_{-1,1}+x_{1,-1}.
\end{split}
\end{equation}
(The sum of the basic elements in Table \ref{tb:Okubo}.) This element $e$ is an idempotent: $e*e=e$. Thus $\cN^4=\FF(1+e)$, and the automorphism $\varphi$ stabilizes $\cN^4$ so $\varphi(1+e)=\alpha(1+e)$ for some $0\ne\alpha\in \FF$. But $\varphi(1)=1$ and $\varphi$ leaves invariant $\FF_0[x,y]$, so $\alpha=1$ and $\varphi(e)=e$. In other words, this special idempotent $e$ is invariant under the action of $\Aut(\cO,*)$. Actually, straightforward computations give the following result:

\begin{proposition}\label{pr:e_fixed}
The subalgebra of the elements fixed by $\Aut(\cO,*)$ is precisely $\FF e$, where $e$ is the idempotent in \eqref{eq:e}.
\end{proposition}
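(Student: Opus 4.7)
My plan is to prove the two inclusions separately. The containment $\FF e \subseteq (\cO)^{\Aut(\cO,*)}$ is already contained in the discussion just above the statement: every $\varphi\in\Aut(\cO,*)$ extends to a unital automorphism of $(\FF[x,y],\diamond)$ that stabilises the one-dimensional subspace $\cN^4 = \FF(1+e)$ and preserves $\cO=\FF_0[x,y]$, so $\varphi(e)=e$. I would simply invoke this.

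For the reverse inclusion I plan to show that already the Levi subgroup $H \cong \SL_2(\FF)$ of $\Aut(\cO,*)$ provided by the previous theorem has $\FF e$ as its fixed subspace in $\cO$. Write $A := \FF[x,y] = \FF[u,v]/(u^3,v^3)$ with $u=x-1$, $v=y-1$. Every element of $H$ acts on $A$ as a unital algebra automorphism and therefore preserves the $\cN$-adic filtration (with $\cN = (u,v)$); in particular $H$ acts on each graded piece $\gr^k_{\cN} A$, of dimensions $1, 2, 3, 2, 1$ for $k=0,\ldots,4$. The crucial point is that the induced $H$-action on the cotangent space $\gr^1_\cN A = \FF u \oplus \FF v$ is the standard two-dimensional module $V$ of $\SL_2(\FF)$; this follows by unpacking the construction of the Levi subgroup in the previous theorem.

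Granted this identification, the associated graded decomposes as
\[
\gr_\cN A \;\cong\; \FF \oplus V \oplus S^2 V \oplus \bigl(S^3 V/(\FF u^3\oplus \FF v^3)\bigr) \oplus \FF
\]
as an $H$-module. In characteristic three, $S^2V$ is the simple Weyl module of highest weight $2$ (dimension $3$, simple because $2<p=3$), while the submodule $\FF u^3\oplus \FF v^3 \subset S^3 V$ is the Frobenius twist of the standard representation---well-defined precisely because in characteristic three the substitution $u\mapsto au+cv$, $v\mapsto bu+dv$ acts on the cubes by the matrix with cubed entries. Hence the quotient $S^3V/(\FF u^3\oplus \FF v^3)$ is again the standard module $V$. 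Each nontrivial summand being simple and nontrivial, the trivial isotypic part of $\gr_\cN A$ has dimension exactly $2$, namely $\gr^0=\FF\cdot 1$ and $\gr^4=\FF u^2 v^2$. Therefore $\dim A^H\le 2$; since $1$ and $u^2 v^2 = 1+e$ are manifestly $H$-fixed, $A^H=\FF 1 \oplus \FF e$. Intersecting with $\cO=\FF_0[x,y]$ yields $\cO^H=\FF e$, and the chain $\FF e \subseteq \cO^{\Aut(\cO,*)}\subseteq \cO^H=\FF e$ closes the argument.

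I expect the main obstacle to lie in verifying that the Levi subgroup $H$ really does act on $\cN/\cN^2$ by the standard representation of $\SL_2(\FF)$ rather than, say, by a Frobenius twist or a non-linearly distorted action: this is the point where the detailed content of the previous structure theorem is genuinely used. Once this identification is settled, the remainder is a clean representation-theoretic calculation, matching the "straightforward computations" referred to by the authors.
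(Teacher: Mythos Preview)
Your approach is correct and is genuinely different from the paper's. The paper offers no argument beyond ``straightforward computations,'' which presumably means writing down a few explicit automorphisms (for example, the automorphisms of $\FF[x,y]$ induced by simple linear substitutions on $(x-1,y-1)$) and intersecting their fixed spaces directly. Your argument replaces this by a clean representation-theoretic analysis of the Levi factor $H\cong\SL_2(\FF)$ acting on the associated graded of the $\cN$-adic filtration.

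Two remarks on the gap you yourself flag. First, the possibility that $H$ acts trivially on $\cN/\cN^2$ can be excluded without opening \cite{Eld99}: the kernel of the natural map $\Aut(A)\to\GL(\cN/\cN^2)$ is a unipotent group (it consists of automorphisms inducing the identity on the associated graded), and a nontrivial homomorphism from the reductive group $\SL_2$ into a unipotent group is impossible. Second, your worry about Frobenius twists is harmless for the argument: if $\gr^1_\cN A\cong V^{(r)}$ for some $r\ge 0$, then $\gr^2\cong (S^2V)^{(r)}$ and $\gr^3\cong V^{(r)}$ are still simple and nontrivial, so $(\gr_\cN A)^H$ is still two-dimensional and the rest goes through unchanged. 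Hence the ``main obstacle'' dissolves.

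What your approach buys is a conceptual explanation for \emph{why} the fixed space is one-dimensional: it is forced by the $\SL_2$-module structure of the graded pieces, not by an opaque calculation. The price is reliance on the structure theorem for $\Aut(\cO,*)$ from \cite{Eld99}, whereas a purely computational proof could in principle proceed from explicit automorphisms alone. Both routes are short; yours is more illuminating.
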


\smallskip

Consider now the affine group scheme $\Gs=\AAut(\cO,*)$ and the stabilizer of the idempotent $e$ above: $\Hs=\AAut(\cO,*,e)$. Then $\Gs(\FF)=\Hs(\FF)=\Aut(\cO,*)$.

Recall \cite[\S 21]{KMRT} that given an algebraic group scheme $\Ks$ there is an associated smooth algebraic group scheme $\Ks_{\textrm{red}}$, the largest smooth closed subgroup of $\Ks$.

\begin{proposition}[{\cite[Corollary 10.8 and Proposition 10.10]{CEKT}}]
$\Hs$ is precisely $\Gs_{\textrm{red}}$.
\end{proposition}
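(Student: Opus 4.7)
The plan is to prove $\Hs=\Gs_{\mathrm{red}}$ by verifying that $\Hs$ is a smooth closed subgroup scheme of $\Gs$ sharing the same $\FF$-points as $\Gs$; since over an algebraically closed field a smooth closed subgroup scheme with the same rational points as $\Gs_{\mathrm{red}}$ must coincide with it, the result follows.

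First I would record the equality on $\FF$-points: the inclusion $\Hs(\FF)\subseteq\Gs(\FF)$ is clear by construction, and the reverse inclusion is exactly Proposition \ref{pr:e_fixed}. Hence $\Hs(\FF)=\Gs(\FF)=\Aut(\cO,*)$, and by the preceding theorem this common group is a connected algebraic group of dimension $8$ (the semidirect product of a $5$-dimensional unipotent radical with a copy of $\SL_2(\FF)$).

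The crux of the argument is to show that the group scheme $\Hs$ is smooth, which amounts to verifying $\dim\Lie(\Hs)=8$. By definition $\Lie(\Hs)=\{d\in\Der(\cO,*):d(e)=0\}$, and $\dim\Der(\cO,*)=10$ by Theorem \ref{th:derivations}, so the goal is to show that the linear map $d\mapsto d(e)$ has rank exactly $2$. For this I would use the $\ZZ_3^2$-grading of $\Der(\cO,*)$ described in the remark after Theorem \ref{th:derivations}, uniquely decomposing
\[
d=\alpha\,x\frac{\partial}{\partial x}+\beta\,y\frac{\partial}{\partial y}+\sum_{g\ne(0,0)}\alpha_g\,\ad_{x_g}^*,
\]
and using the Block-algebra formula $\ad_{x_g}^*(x_{h-g})=(g_1 h_2-g_2 h_1)\,x_h$ (valid for all $g,h$, both sides being zero when $g=h$) from the last remark of Section 3. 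A direct expansion of $d(e)=\sum_{h\ne 0}(d(e))_h$ shows that the $h$-component is
\[
(d(e))_h=\bigl(h_1 A+h_2 B\bigr)\,x_h,\qquad A=\alpha-\sum_{g}g_2\alpha_g,\ B=\beta+\sum_{g}g_1\alpha_g,
\]
with $A,B$ independent of $h$. Hence the eight conditions $(d(e))_h=0$ collapse to the two conditions $A=B=0$, giving $\dim\Lie(\Hs)=10-2=8=\dim\Hs(\FF)$ and proving smoothness.

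With $\Hs$ smooth, the conclusion is automatic: $\Hs$ is a smooth closed subgroup scheme of $\Gs$, so $\Hs\subseteq\Gs_{\mathrm{red}}$, while both are connected smooth algebraic groups of dimension $8$ with identical $\FF$-points $\Aut(\cO,*)$, forcing $\Hs=\Gs_{\mathrm{red}}$. The main obstacle is the Lie algebra count above; the subtle point is that the linearity in $h$ of the coefficient $g_1 h_2-g_2 h_1$ makes the eight a priori independent linear conditions $(d(e))_h=0$ collapse to only two, which is precisely what accounts for the two missing dimensions of $\Gs$ mentioned in the introduction.
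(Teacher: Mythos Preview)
Your argument is correct and complete. Note, however, that the paper does not actually give a proof of this proposition; it simply quotes the result from \cite[Corollary 10.8 and Proposition 10.10]{CEKT}. So there is no in-paper proof to compare against, and what you have supplied is a self-contained verification using only the material developed in Sections~3 and~4.

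Your approach---checking smoothness of $\Hs$ by computing $\dim\Lie(\Hs)$ directly---is the natural one in this context, and the key step is handled cleanly. The decomposition of a general $d\in\Der(\cO,*)$ via the $\ZZ_3^2$-grading is exactly what the remark after Theorem~\ref{th:derivations} provides, and the Block-algebra formula for $[x_g,x_k]^*$ makes the evaluation at $e=\sum_{k\ne 0}x_k$ transparent. Your observation that the coefficient $g_1h_2-g_2h_1$ is \emph{linear} in $h$ is precisely why the eight conditions collapse to two; this is the heart of the matter and you have identified it correctly. One tiny quibble: when you write ``$\ad_{x_g}^*(x_{h-g})=(g_1h_2-g_2h_1)x_h$ valid for all $g,h$'', the case $h=g$ is not literally an evaluation (there is no $x_0$ in $\cO$), but since $e$ has no degree-zero component this term never arises in computing $d(e)$, so the slip is harmless.

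The concluding step is also fine: once $\Hs$ is smooth it sits inside $\Gs_{\mathrm{red}}$ by maximality, and a closed immersion of reduced $\FF$-schemes of finite type that is bijective on $\FF$-points (with $\FF$ algebraically closed) is an isomorphism. You do not actually need connectedness for this, so that remark could be dropped.
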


However, $\Gs$ is not smooth, and hence the eight-dimensional subgroup $\Hs$ does not fill the whole $\Gs$. To see what is missing, remember that the assignment $\deg(x_{i,j})=(i,j)\pmod 3$ gives a grading of $\cO$ by $\ZZ_3^2$, and hence \cite[\S 1.4]{EKmon} there is a group scheme homomorphism
\[
\bmu_3\times\bmu_3\rightarrow \Gs,
\]
where $\bmu_3$ is the group scheme of $3^{\textrm{rd}}$ roots of unity, such that for any unital commutative associative $\FF$-algebra $R$ and any $\alpha,\beta\in R$ with $\alpha^3=\beta^3=1$, the image of $(\alpha,\beta)$ is the automorphism of $(\cO\otimes_\FF R,*)$ determined by
\[
x_{1,0}\otimes 1\mapsto x_{1,0}\otimes\alpha,\qquad x_{0,1}\otimes 1\mapsto x_{0,1}\otimes \beta.
\]
The image $\Ds$ under this homomorphism is the group scheme of diagonal automorphisms relative to the basis in Table \ref{tb:Okubo}.

\begin{theorem}[{\cite[Theorem 11.8 and Proposition 11.9]{CEKT}}]
For any unital commutative associative $\FF$-algebra $R$, $\Gs(R)=\Hs(R)\Ds(R)$ and $\Hs(R)\cap\Ds(R)=1$. Neither of the subgroups $\Hs$ and $\Ds$ are normal.
\end{theorem}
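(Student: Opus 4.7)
The plan is to show the multiplication morphism
$m\colon \Hs\times\Ds\to\Gs$, $(\psi,d)\mapsto \psi\cdot d$, is an isomorphism of affine group schemes; both $\Gs(R)=\Hs(R)\Ds(R)$ and $\Hs(R)\cap\Ds(R)=1$ then follow functorially, and the non-normality claims will be treated separately afterwards.

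First, $m$ is a monomorphism. An $R$-point of $\Hs\cap\Ds$ is a pair $(\alpha,\beta)\in R\times R$ with $\alpha^3=\beta^3=1$, acting on $\cO\otimes R$ by $x_{i,j}\otimes 1\mapsto \alpha^i\beta^j\, x_{i,j}\otimes 1$, that in addition fixes $e\otimes 1=\sum_{(i,j)\ne(0,0)}x_{i,j}\otimes 1$. By $R$-linear independence of the $x_{i,j}\otimes 1$, this forces $\alpha^i\beta^j=1$ for every $(i,j)$ with $-1\le i,j\le 1$ and $(i,j)\ne(0,0)$; specializing to $(1,0)$ and $(0,1)$ yields $\alpha=\beta=1$.

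Second, to show $m$ is an isomorphism I would combine a geometric-point check with a Lie algebra computation. On $\Falg$-points, $\Ds(\Falg)=\bmu_3(\Falg)^2=1$ because in characteristic three the only cube root of $1$ in $\Falg$ is $1$, while $\Hs(\Falg)=\Gs_{\mathrm{red}}(\Falg)=\Gs(\Falg)$ by the preceding proposition, so $m$ is bijective on $\Falg$-points. At the identity, $\Lie(\Hs)=[\Der(\cO,*),\Der(\cO,*)]$ is the eight-dimensional simple ideal, whereas $\Lie(\Ds)$ is the two-dimensional toral $(0,0)$-component $\FF\, x\tfrac{\partial}{\partial x}\oplus\FF\, y\tfrac{\partial}{\partial y}$ of the $\ZZ_3^2$-grading on $\Der(\cO,*)$; these subspaces are complementary and span $\Der(\cO,*)=\Lie(\Gs)$, so $\Lie(m)$ is an isomorphism. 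A monomorphism of affine algebraic group schemes over a field that is bijective on $\Falg$-points and induces an isomorphism of Lie algebras at the identity---hence at every closed point by left translation---is étale and surjective, and therefore an isomorphism. The two ``missing'' dimensions $\dim\Der(\cO,*)-\dim\Hs=10-8=2$ are precisely those contributed by the infinitesimal group scheme $\Ds=\bmu_3\times\bmu_3$.

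For the non-normality of $\Hs$, the key observation is that the $\Hs(R)$-fixed subspace of $\cO\otimes R$ equals $R\cdot(e\otimes 1)$: this follows from $(\cO\otimes R)^{\Hs(\FF)}=\cO^{\Aut(\cO,*)}\otimes R=\FF e\otimes R$ by flat base change of Proposition \ref{pr:e_fixed}, together with $e\otimes 1\in(\cO\otimes R)^{\Hs(R)}$. If $\Hs$ were normal, then $d\Hs d^{-1}=\Hs$ would stabilize $d\cdot(e\otimes 1)$ for every $d\in\Ds(R)$, forcing $d\cdot(e\otimes 1)\in R\cdot(e\otimes 1)$; but the linear-independence computation of the first step shows $d\cdot(e\otimes 1)=\sum\alpha^i\beta^j\, x_{i,j}\otimes 1$ lies in $R\cdot(e\otimes 1)$ only when $d=1$. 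Dually, if $\Ds$ were normal, the adjoint action of $\Hs(\FF)=\Aut(\cO,*)$ on $\Lie(\Gs)=\Der(\cO,*)$ would preserve the $(0,0)$-component $\Lie(\Ds)$; but $\Aut(\cO,*)$ does not preserve the $\ZZ_3^2$-grading of $\cO$ (its $\SL_2(\FF)$-subgroup mixes graded components), so its $\Ad$-action does not preserve the toral subalgebra $\Lie(\Ds)$. The step I expect to require most care is the scheme-theoretic conclusion that $m$ is an isomorphism, since $\Gs$ is non-smooth and the interaction of the reduced $\Hs$ with the infinitesimal $\Ds$ must be tracked carefully at the Hopf algebra level.
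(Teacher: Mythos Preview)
The paper does not give its own proof of this statement; it simply cites \cite[Theorem 11.8 and Proposition 11.9]{CEKT}. So there is no in-paper argument to compare against, and your task is really to produce a correct independent proof. Your outline has the right architecture, but there are two genuine problems.

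First, your identification $\Lie(\Hs)=[\Der(\cO,*),\Der(\cO,*)]$ is false. By definition $\Hs$ is the stabilizer of $e$, so $\Lie(\Hs)=\{d\in\Der(\cO,*):d(e)=0\}$. The derived subalgebra is spanned by the maps $\ad_u^*$, and $\ad_u^*(e)=[u,e]^*$ in the simple Lie algebra $(\cO,*)^-$; since a simple Lie algebra has trivial center, $[u,e]^*\ne 0$ for some $u$, hence $\ad_u^*\notin\Lie(\Hs)$. (Equivalently, $\ad_u^*(e)=0$ for all $u$ would force $e\in\cK(\cO,*)$, which is trivial.) The complementarity $\Lie(\Hs)\oplus\Lie(\Ds)=\Lie(\Gs)$ that you actually need follows instead from $\Lie(\Hs\cap\Ds)=\Lie(\Hs)\cap\Lie(\Ds)=0$ (your first step) together with the dimension count $8+2=10$.

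Second, and more seriously, you call $m$ an ``isomorphism of affine group schemes'' and invoke left translation to propagate the differential isomorphism from the identity to every point. But $m$ is \emph{not} a homomorphism of group schemes: that would force one of $\Hs,\Ds$ to be normal, precisely what the theorem denies. Without the homomorphism property, translating the tangent computation away from $(1,1)$ is not automatic; at a general point $(\psi,d)$ the image of $dm$ is $\Lie(\Hs)+\Ad_\psi\bigl(\Lie(\Ds)\bigr)$, and you would need to show this is all of $\Lie(\Gs)$ for every $\psi\in\Hs(\Falg)$, not just $\psi=1$. A cleaner route here is to exploit that $\Ds\simeq\bmu_3\times\bmu_3$ is infinitesimal in characteristic~$3$: argue directly on $R$-points by showing that for any $\varphi\in\Gs(R)$ there is a (unique) $d\in\Ds(R)$ with $d^{-1}\varphi\in\Hs(R)$, i.e.\ with $d^{-1}\varphi(e\otimes 1)=e\otimes 1$. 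Your non-normality arguments are sound in outline, though the claim about $\SL_2(\FF)$ mixing graded components should be made explicit.
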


Note that $\bmu_3\times\bmu_3$ is not smooth ($\chr\FF=3$). Its Lie algebra has dimension $2$ and this accounts for the difference between the dimension of the Lie algebra $\Lie(\Gs)=\Der(\cO,*)$ (Theorem \ref{th:derivations}) and  the dimension of $\Aut(\cO,*)$.

\bigskip

\section{Idempotents}

Proposition \ref{pr:e_fixed} highlights a very special idempotent of the Okubo algebra in characteristic three.

Let $(\cO,*,n)$ be the Okubo algebra over $\FF$ (of any characteristic), and let $f$ be an idempotent: $f*f=f\ne 0$. Then $f=(f*f)*f=n(f)f$, so $n(f)=1$ (see Lemma \ref{le:symmetric}).

As in \cite{Petersson} or \cite{ElduquePerez}, consider the new multiplication on $\cO$ defined by
\begin{equation}\label{eq:Hurwitz_product}
x\cdot y=(f*x)*(y*f),
\end{equation}
for any $x,y\in\cO$. Then, for any $x,y\in\cO$,
\[
\begin{gathered}
f\cdot x=(f*f)*(x*f)=f*(x*f)=n(f)x=x,\\
x\cdot f=(f*x)*(f*f)=(f*x)*f=n(f)x=x,\\
n(x\cdot y)=n\bigl((f*x)*(y*f)\bigr)=n(f)^2n(x)n(y=n(x)n(y),
\end{gathered}
\]
so $(\cO,\cdot,n)$ is the Cayley algebra over $\FF$, i.e., the unital eight-dimensional composition algebra over $\FF$, with unity $1=f$.

For background on composition algebras the reader may consult \cite[Chapter VIII]{KMRT}.

\begin{remark}
If the characteristic of $\FF$ is three, and we take as our idempotent $f$ the idempotent $e$ in \eqref{eq:e} fixed by $\Aut(\cO,*)$, then it follows that $\Aut(\cO,*)$ is contained in the automorphism group $\Aut(\cO,\cdot)$ of the associated Cayley algebra, which is the simple group of type $G_2$.  This gives an algebraic explanation of the result by Tits on geometric triality in \cite[Theorem 10.1]{Tits}, as the group $\Aut(\cO,*)$ is the fixed subgroup of a triality automorphism of the spin group attached to the norm $n$.
\end{remark}

\smallskip

The first step in studying the idempotents of $(\cO,*)$ is to relate them to order three automorphisms (see \cite[(34.9)]{KMRT} and references therein):

\begin{proposition}\label{pr:tau}
The linear map
\begin{equation}\label{eq:tau}
\tau: x\mapsto f*(f*x)
\end{equation}
is an automorphism of order $3$ of both the Okubo algebra $(\cO,*,n)$ and the Cayley algebra $(\cO,\cdot,n)$.

Moreover, the subalgebra of elements fixed by $\tau$ coincides with the centralizer of $f$ in $(\cO,*)$: $\Centr_{(\cO,*)}(f)=\{x\in\cO: x*f=f*x\}$.
\end{proposition}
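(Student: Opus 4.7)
My plan is to derive closed-form expressions for the low powers of $L_f\colon x\mapsto f*x$ and $R_f\colon x\mapsto x*f$, and then to use these to handle in turn the order of $\tau$, the automorphism property on both products, and the identification of the fixed subalgebra.

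Since $n(f)=1$ and $f*f=f$, Lemma \ref{le:symmetric} gives at once $L_fR_f=R_fL_f=\id$. Linearizing the identities $(x*y)*x=n(x)y$ and $x*(y*x)=n(x)y$ in $x$ yields
\[(\mathrm{A})\colon(X*Y)*Z+(Z*Y)*X=n(X,Z)Y,\quad(\mathrm{B})\colon X*(Y*Z)+Z*(Y*X)=n(X,Z)Y.\]
Specializing $(\mathrm{B})$ at $Y=Z=f$ gives the key formula $\tau(x)=L_f^2(x)=n(f,x)f-x*f$, and $(\mathrm{A})$ at $Y=Z=f$ gives the symmetric $\tau^{-1}(x)=R_f^2(x)=n(f,x)f-f*x$. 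Applying $L_f$ to the first formula and using $L_fR_f=\id$ and $L_f(f)=f$ produces $L_f^3(x)=n(f,x)f-x$, and a second iteration together with $n(f,f)=2n(f)=2$ yields $L_f^6=\id$, so $\tau^3=\id$. In the Okubo case, $\tau=\id$ would force $L_f=R_f$, i.e., $f\in\cK(\cO,*)$, which is trivial by \eqref{eq:KO}; hence $\tau$ has order exactly $3$.

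The automorphism property is the main obstacle. Expanding $\tau(x)*\tau(y)$ from the closed form and using $f*f=f$, $f*(y*f)=y$, and $(x*f)*f=n(f,x)f-f*x$ collapses it to
\[\tau(x)*\tau(y)=-n(f,x)y+n(f,y)(f*x)+(x*f)*(y*f).\]
On the other side, I would apply $(\mathrm{A})$ twice---first with $(X,Y,Z)=(x,y,f)$ to rewrite $(x*y)*f=n(f,x)y-(f*y)*x$, then with $(X,Y,Z)=(x,f,y*f)$ to rewrite $(x*f)*(y*f)=n(x,y*f)f-n(f,y)(f*x)+(f*y)*x$---and invoke the polar-form identity $n(f,x*y)=n(x,y*f)$, a consequence of $n(a*b,c)=n(a,b*c)$. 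The delicate step is matching the occurrences of $(f*y)*x$ and $f*x$ that arise on both sides; once this is done, every non-scalar term cancels in pairs and $\tau(x*y)-\tau(x)*\tau(y)=0$. The automorphism property for the Cayley product is then immediate: from $x\cdot y=L_f(x)*R_f(y)$ and the commutation relations $\tau L_f=L_f^3=L_f\tau$ and $\tau R_f=R_f\tau=L_f$ (the latter because $R_fL_f=\id$), one gets $\tau(x\cdot y)=L_f(\tau(x))*R_f(\tau(y))=\tau(x)\cdot\tau(y)$. That $\tau$ preserves $n$ follows by applying $\tau$ to $(x*y)*x=n(x)y$ and using surjectivity, so $n$ need not be treated separately.

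For the fixed subalgebra, if $\tau(x)=x$ then also $\tau^{-1}(x)=x$; subtracting the two closed-form expressions gives $f*x=x*f$, so $x\in\Centr_{(\cO,*)}(f)$. Conversely, if $L_f(x)=R_f(x)$, those same formulas yield $\tau(x)=\tau^{-1}(x)$, hence $\tau^2(x)=x$; combined with $\tau^3=\id$ this forces $\tau(x)=x$.
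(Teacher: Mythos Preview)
Your proof is correct and follows essentially the same route as the paper's: both derive the closed forms $\tau(x)=n(f,x)f-x*f$ and $\tau^{-1}(x)=n(f,x)f-f*x$ from the linearized composition identities, compute $L_f^3(x)=n(f,x)f-x$ to obtain $\tau^3=\id$, verify $\tau\in\Aut(\cO,*)$ by the same expansion (your applications of (A) are exactly the substitutions the paper performs inline), and rule out $\tau=\id$ via $\cK(\cO,*)=0$. Your treatment is slightly more explicit in two places---you spell out the commutation $\tau L_f=L_f\tau$, $\tau R_f=R_f\tau$ to deduce $\tau\in\Aut(\cO,\cdot)$ where the paper just invokes $\tau(f)=f$, and you argue the converse inclusion for the fixed subalgebra via $\tau^2(x)=x$ together with $\tau^3=\id$---but these are elaborations of the same argument rather than a different approach.
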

\begin{proof}
Because of equation \eqref{eq:xyx} we have
\[
\tau(x)=f*(f*x)=n(f,x)f-x*f,
\]
for any $x\in\cO$. Also, $\tau$ is bijective with inverse
\[
\tau^{-1}:x\mapsto (x*f)*f=n(f,x)f-f*x.
\]
For $x,y\in\cO$,
\[
\begin{split}
\tau(x)*\tau(y)&=\bigl(n(f,x)f-x*f\bigr)*\bigl(n(f,y)f-y*f\bigr)\\
    &=n(f,x)n(f,y)f-n(f,x)y-n(f,y)(x*f)*f+(x*f)*(y*f)\\
    &=n(f,y)\Bigl(n(f,x)f-(x*f)*f\Bigr)-n(f,x)y\\
    &\hspace{140pt} +\Bigl(n(x,y*f)f-((y*f)*f)*x\Bigr)\\
    &=n(f,y)f*x-n(f,x)y+n(x*y,f)f-n(f,y)f*x+(f*y)*x\\
    &=n(f,x*y)f-(x*y)*f\\
    &=\tau(x*y).
\end{split}
\]
Hence, $\tau\in\Aut(\cO,*)$. Since $\tau(f)=f$, it is clear that $\tau$ is in $\Aut(\cO,\cdot)$ too. Also,
\[
f*(f*(f*x))=n(f,f*x)f-(f*x)*f=n(f,x)f-x,
\]
which is minus the reflection of $x$ relative to the hyperplane orthogonal to $f$. Hence, the sixth power of the left multiplication by $f$ is the identity: $\tau^3=1$. On the other hand, an element $x$ is fixed by $\tau$ if and only if $x=f*(f*x)$. Multiply on the right on both sides by $f$ to get $x*f=f*x$, so the subalgebra of fixed elements by $\tau$ is the centralizer of $f$.  In particular, $\tau=1$ if and only if  $f$ is in the commutative center $\cK(\cO,*)$ in \eqref{eq:KO}, which is trivial. Hence, the order of $\tau$ is $3$.
\end{proof}

\begin{remark}\label{re:para-Hurwitz}
Under the conditions of Proposition \ref{pr:tau}, the multiplication in the Okubo algebra is recovered as follows:
\[
\begin{split}
x*y&=(x*f)\cdot(f*y)\\
  &=\tau(\bar x)\cdot\tau^2(\bar y),
\end{split}
\]
where $\bar x=n(f,x)f-x$. The composition algebras with this kind of multiplication are called Petersson algebras \cite{Petersson}.

Given a Hurwitz algebra (i.e.; a unital composition algebra) $(\cC,\cdot,n)$, the algebra defined on $\cC$ but with new product $x\bullet y=\bar x\cdot \bar y$, where $\bar x=n(1,x)1-x$, is called the associated \emph{para-Hurwitz algebra}. In this way we may talk about para-Cayley algebras, para-quaternion algebras, or para-quadratic algebras, depending on $(\cC,\cdot,n)$ being a Cayley algebra ($\dim\cC=8$), a quaternion algebra ($\dim\cC=4$) or a quadratic \'etale algebra ($\dim\cC=2$). The unity of the Hurwitz algebra belongs to the commutative center of the associated para-Hurwitz algebra.
\end{remark}

If the characteristic of $\FF$ is not three, the subalgebra of $(\cO,\cdot)$ of the elements fixed by $\tau$ is a composition subalgebra, and hence its dimension is $2$ or $4$. However, \cite[Theorem 3.5]{ElduquePerez} shows that the first case is impossible. In terms of \eqref{eq:x*y}, if an element $f\in\frsl_3(\FF)$ is an idempotent of the Okubo algebra, its minimal polynomial as an element in $M_3(\FF)$ has degree $\leq 2$. One deduces easily that, up to conjugation, the only possibility is
\[
f=\frac{1}{\omega-\omega^2}\begin{pmatrix} 2&0&0\\ 0&-1&0\\ 0&0&-1\end{pmatrix}.
\]
Note that in terms of Table \ref{tb:Okubo}, the elements $x_{i,j}+x_{-i,-j}$ are all idempotents.
We summarize the situation in the next result:

\begin{theorem}
All  idempotents in the Okubo algebra over $\FF$, $\chr\FF\ne 3$, are conjugate under the automorphism group.
\end{theorem}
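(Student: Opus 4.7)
The plan is to leverage the identification (for $\chr\FF\ne 3$) of the Okubo algebra $(\cO,*)$ with $(\frsl_3(\FF),*)$ and the description $\Aut(\cO,*)\cong\PGL_3(\FF)$ acting by conjugation inside $M_3(\FF)$. Under this dictionary, two idempotents of $\cO$ lie in the same $\Aut(\cO,*)$-orbit if and only if the underlying trace-zero matrices are $\GL_3(\FF)$-similar. The problem therefore reduces to pinning down the similarity class of an arbitrary idempotent of $(\frsl_3(\FF),*)$.

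First I would linearize the idempotent condition. Starting from $f = f*f = (\omega-\omega^2)\bigl(f^2 - \tfrac{1}{3}\tr(f^2)\,1\bigr)$, rearrangement yields
\[
f^2 \;=\; \tfrac{1}{\omega-\omega^2}\,f \;+\; \tfrac{1}{3}\tr(f^2)\,1,
\]
so $f$, viewed in $M_3(\FF)$, satisfies a monic polynomial of degree $2$ over $\FF$. In particular its minimal polynomial has degree at most $2$, which constrains the Jordan type sharply.

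Next I would run a short case analysis using the constraints $\tr(f)=0$ (since $f\in\frsl_3(\FF)$) and $n(f)=\sr(f)=1$ (forced by $f=(f*f)*f=n(f)f$ together with $f\neq0$). If $f$ has a single eigenvalue, then $\tr(f)=0$ forces that eigenvalue to be $0$, so $f$ is either zero or nilpotent with $f^2=0$; Cayley--Hamilton then gives $\sr(f)=0$, contradicting $n(f)=1$. Hence $f$ has two distinct eigenvalues and, since the minimal polynomial then has simple roots, $f$ is diagonalizable with spectrum $\{-2\mu,\mu,\mu\}$ for a unique $\mu\ne 0$. Computing $\sr(f)=-3\mu^2=1$ forces $\mu^2=-\tfrac{1}{3}$, and substituting the diagonal form back into the quadratic identity above pins down the sign, giving $\mu=-\tfrac{1}{\omega-\omega^2}$. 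Therefore every idempotent is $\GL_3(\FF)$-conjugate to $\tfrac{1}{\omega-\omega^2}\diag(2,-1,-1)$, and the theorem follows from the $\PGL_3(\FF)$ description of $\Aut(\cO,*)$.

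The only delicate point is the sign determination at the very end: the norm $n$ does not distinguish $\mu$ from $-\mu$, so one genuinely needs the full idempotent equation, not merely $n(f)=1$, to rule out one of the two square roots of $-\tfrac{1}{3}$. Everything else is routine linear algebra once the quadratic identity for $f$ is in hand.
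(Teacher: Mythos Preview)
Your argument is correct and follows essentially the same route as the paper: identify $(\cO,*)$ with $\frsl_3(\FF)$ and $\Aut(\cO,*)$ with $\PGL_3(\FF)$, observe that $f*f=f$ forces the minimal polynomial of $f$ in $M_3(\FF)$ to have degree at most $2$, and then use $\tr(f)=0$ together with $n(f)=\sr(f)=1$ to pin down the similarity class as that of $\tfrac{1}{\omega-\omega^2}\diag(2,-1,-1)$. The paper states the minimal-polynomial observation and then just says ``one deduces easily''; your case analysis (ruling out the nilpotent case via $\sr$, then handling the diagonalizable two-eigenvalue case and fixing the sign of $\mu$ from the full quadratic identity rather than from $n(f)=1$ alone) is exactly the computation that fills that gap.
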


\smallskip

Again, the situation is quite different in characteristic three. Assume from now on that the characteristic of $\FF$ is three.

Given an endomorphism $\varphi$ of a vector space $V$, denote by $V^\varphi$ the subspace of the elements fixed by $\varphi$. The \emph{rank} of a quadratic form $q:V\rightarrow\FF$ denotes the difference $\dim V-\dim V'$, where $V'\bydef\{v\in V: q(v)=q(v,V)=0\}$. (See \cite[I.2.1]{Che54}.)

\begin{lemma}\label{le:tau}
Let $(\cC,\cdot,n)$ be the Cayley algebra over $\FF$ ($\chr\FF=3$) and let $\tau$ be an automorphism of $(\cC,\cdot)$ of order $3$. Then we have one of the following possibilities:
\begin{romanenumerate}
\item $\dim \cC^\tau=6$. In this case there exists a quaternion subalgebra $\cQ$ of $\cC$ contained in $\cC^\tau$, and the rank of the restriction of the norm $n$ to $\cC^\tau$ is $4$ (so $\cC^\tau=\cQ\oplus (\cC^\tau)^\perp$).
\item $\dim\cC^\tau=4$ and the rank of the restriction of the norm $n$ to $\cC^\tau$ is $2$. In this case there exists a quadratic \'etale subalgebra $\cK$ (i.e., isomorphic to $\FF\times\FF$ since $\FF$ is algebraically closed) contained in $\cC^\tau$.
\item $\dim\cC^\tau=4$ and the rank of the restriction of the norm $n$ to $\cC^\tau$ is $1$. In this case $\cC^\tau=\FF 1\oplus \cW$, where $\cW$ is a totally isotropic three-dimensional space orthogonal to $1$.
\end{romanenumerate}
\end{lemma}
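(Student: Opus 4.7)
Set $N := \tau - \id$. Since $\chr\FF = 3$ and $\tau^3 = \id$, the binomial expansion gives $N^3 = (\tau - \id)^3 = \tau^3 - \id = 0$, so $N$ is nilpotent of index at most three and $\cC^\tau = \ker N$. Because $\tau$ preserves $n$, the identity $n(x, \tau y) = n(\tau^{-1} x, y)$ forces $n(x, Ny) = 0$ for all $x \in \cC^\tau$; a dimension count then yields $\im N = (\cC^\tau)^\perp$. Hence the radical $\cR := \rad(n|_{\cC^\tau}) = \cC^\tau \cap \im N$ contains $\im N^2$. A short polar-form computation using $n(ab,c) = n(a, c\bar b)$ and the invariance of $\cC^\tau$ under the Cayley conjugation shows that $\cR$ is an ideal of $\cC^\tau$, every element $r$ of which satisfies $\bar r = -r$ and $r^2 = -r\bar r = -n(r) = 0$. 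Consequently, $\cC^\tau / \cR$ is a unital algebra with an induced nondegenerate multiplicative norm, hence a Hurwitz algebra, and so $\dim(\cC^\tau / \cR) \in \{1, 2, 4, 8\}$.

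The next step is a Jordan-type case analysis for $\tau$. Orthogonality of $\tau$ (in characteristic different from $2$) forces every even-sized Jordan block at the eigenvalue $1$ to occur with even multiplicity, so, for $\tau \ne \id$, the available partitions of $8$ into parts $\leq 3$ are $(3,1^5)$, $(3^2,1^2)$, $(2^2,1^4)$, $(3,2^2,1)$, and $(2^4)$. Computing $\dim \cC^\tau$ and $\dim \cR = \dim(\cC^\tau \cap \im N)$ in a Jordan basis, the Hurwitz restriction eliminates two of them: for $(3,1^5)$ one finds $\dim(\cC^\tau/\cR) = 5$, which is not a Hurwitz dimension; and for $(2^4)$ one has $\im N = \cC^\tau$, making $\cC^\tau$ totally isotropic, which contradicts $n(1) \neq 0$. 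The three remaining types yield exactly the three cases of the lemma: $(2^2,1^4)$ gives case~(i), with $\dim \cC^\tau = 6$ and $\dim \cR = 2$; $(3^2,1^2)$ gives case~(ii), with $\dim \cC^\tau = 4$ and $\dim \cR = 2$; and $(3,2^2,1)$ gives case~(iii), with $\dim \cC^\tau = 4$ and $\dim \cR = 3$.

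Finally, one lifts the Hurwitz quotient to a subalgebra of $\cC^\tau$. In case~(i), choose a four-dimensional subspace $\cQ \subseteq \cC^\tau$ containing $1$, complementary to $\cR$, and on which $n$ is nondegenerate; the alternative identities in $\cC$, combined with the fact that $\cR$ is an ideal of square zero in $\cC^\tau$, imply that $\cQ$ is closed under the Cayley product, so $\cQ$ is a quaternion subalgebra and $\cC^\tau = \cQ \oplus \cR$. Case~(ii) is strictly analogous, producing a two-dimensional \'etale subalgebra $\cK$, which by algebraic closure of $\FF$ is isomorphic to $\FF \times \FF$. Case~(iii) is immediate from $\cC^\tau = \FF 1 \oplus \cR$ with $\cR$ totally isotropic and orthogonal to $1$. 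The main obstacle will be the Jordan-type enumeration together with the eliminations of $(3,1^5)$ and $(2^4)$, and the explicit lifting of $\cQ$ in case~(i), where closure under the non-associative Cayley product has to be verified carefully using the alternative identities.
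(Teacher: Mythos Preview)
Your approach via the Jordan type of $\tau$ and the Hurwitz constraint on $\cC^\tau/\cR$ is genuinely different from the paper's. The paper never looks at Jordan blocks: it fixes an anisotropic vector $a\in\cC_0^\tau$ (when one exists), builds the idempotents $e_1,e_2$, and uses the Peirce decomposition $\cC=\FF e_1\oplus\FF e_2\oplus\cU\oplus\cV$ with $\cU,\cV$ totally isotropic, $\tau$-invariant, and dually paired. Then $\dim\cU^\tau=\dim\cV^\tau\in\{1,2\}$ immediately separates cases (i) and (ii), and the quaternion subalgebra in case (i) is produced directly as $\cQ=\FF e_1\oplus\FF e_2\oplus\FF u\oplus\FF v$ for suitable $u\in\cU^\tau$, $v\in\cV^\tau$. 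Your route is more systematic (the elimination of $(3,1^5)$ via ``$5$ is not a Hurwitz dimension'' is nice), while the paper's Peirce argument is shorter and constructs the required subalgebra for free.

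There is, however, a real gap in your lifting step. In case (i) you write: ``choose a four-dimensional subspace $\cQ\subseteq\cC^\tau$ containing $1$, complementary to $\cR$, and on which $n$ is nondegenerate; the alternative identities \ldots\ imply that $\cQ$ is closed under the Cayley product.'' This is false for an arbitrary such complement: having $\cR$ a square-zero ideal only tells you that $\cC^\tau/\cR$ is a quaternion algebra, not that every nondegenerate linear complement is multiplicatively closed. Alternativity gives no mechanism forcing the $\cR$-component of $a\cdot b$ to vanish for $a,b$ in a randomly chosen $\cQ$. The fix is to build $\cQ$ by doubling inside $\cC^\tau$: since $n\vert_{\cC^\tau}$ has rank $4$ and $n(1)=1$, pick $a\in\cC^\tau\cap 1^\perp$ with $n(a)\ne 0$; then $\cK=\FF 1+\FF a$ is automatically a (quadratic \'etale) subalgebra because every element of a Cayley algebra satisfies $a^2=n(1,a)a-n(a)1$. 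Next pick $b\in\cK^\perp\cap\cC^\tau$ with $n(b)\ne 0$ (possible since the rank is $4$); then $\cQ=\cK+\cK b\subseteq\cC^\tau$ is a genuine quaternion subalgebra by the standard Cayley--Dickson argument. The same correction applies in case (ii). With this repair your argument goes through; the paper's Peirce approach sidesteps the issue entirely by never passing through the quotient.
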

\begin{proof}
Obviously $1\in\cC^\tau$. Also, the restriction of $n$ to $\cC^\tau$ is not regular, as this would imply that $(\cC^\tau)^\perp\cap\cC^\tau=0$, but $(\cC^\tau)^\perp$ is $\tau$-invariant, so it contains nonzero eigenvectors for $\tau$. 

Let $\cC_0$ be the subspace orthogonal to $1$ relative to $n$. The restriction $\tau\vert_{\cC_0}$ satisfies $(\tau-1)^3=0\ne \tau -1$, which implies $3\leq \dim\cC_0^\tau\leq 6$. If $\cC_0^\tau$ is totally isotropic, we obtain the last possibility. Otherwise, there is an element $a\in\cC_0$ with $n(a)\ne 0$. Since $\FF$ is algebraically closed we may assume $n(a)=-1$, and hence the elements $e_1=\frac{1}{2}(1+a)$ and $e_2=\frac{1}{2}(1-a)$ are idempotents whose sum is $1$ contained in $\cC^\tau$. Consider the corresponding Peirce decomposition (see, for instance, \cite[\S 4.1]{EKmon}):
\[
\cC=\FF e_1\oplus\FF e_2\oplus \cU\oplus\cV,
\]
where $\cU=\{x\in\cC: e_1\cdot x=x=x\cdot e_2\}$ and $\cV=\{ x\in\cC: e_2\cdot x=x=x\cdot e_1\}$. Both $\cU$ and $\cV$ are invariant under $\tau$, and they are totally isotropic and paired by $n$. 

Moreover, $n$ is invariant under $\tau$, so $\dim \cU^\tau=\dim\cV^\tau$ is either $1$ or $2$. If this common dimension is $1$, then $n(\cU^\tau,\cV^\tau)=0$, as otherwise the restriction of $n$ to $\cC^\tau$ would be regular. Hence we obtain the second possibility. On the contrary, if this common dimension is $2$, then there are elements $u\in\cU^\tau$ and $v\in\cV^\tau$ such that $n(u,v)\ne 0$, the subspace $\cQ=\FF e_1\oplus\FF e_2\oplus\FF u\oplus\FF v$ is a quaternion subalgebra contained in $\cC^\tau$ and $\cC^\tau\cap\cQ^\perp$ is a two-dimensional totally isotropic subspace (if there were an element $x\in\cC^\tau\cap\cQ^\perp$ with $n(x)\ne 0$, then $\cC=\cQ\oplus\cQ x$ would be contained in $\cC^\tau$, so $\tau$ would be the identity). We thus obtain the first possibility in the Lemma.
\end{proof}

Recall that if $f$ is an idempotent of the Okubo algebra $(\cO,*,n)$, and we consider the Cayley algebra $(\cO,\cdot,n)$ with the product in \eqref{eq:Hurwitz_product}, and its order $3$ automorphism $\tau$ in Proposition \ref{pr:tau}, then $\cO^\tau=\Centr_{(\cO,*)}(f)$.

In this situation, given a quaternion subalgebra $(\cQ,\cdot,n)$ of $(\cO,\cdot,n)$ with $\cQ\subseteq \cO^\tau$, the multiplication $*$ in $\cQ$ is given by $x*y=\bar x\cdot \bar y$, because $\tau\vert_\cQ=1$, so $(\cQ,*,n)$ is a para-quaternion algebra (see Remark \ref{re:para-Hurwitz}).

Therefore, Lemma \ref{le:tau} shows that there are three different types of idempotents in the Okubo algebra:

\begin{df}\label{df:idempotents_3}
Let $f$ be an idempotent of the Okubo algebra $(\cO,*,n)$  ($\chr\FF=3$). Then $f$ is said to be:
\begin{itemize}
\item \emph{quaternionic}, if its centralizer contains a para-quaternion algebra,
\item \emph{quadratic}, if its centralizer contains a para-quadratic algebra and no para-quaternion subalgebra,
\item \emph{singular}, otherwise.
\end{itemize}
\end{df}

Using the multiplication in Table \ref{tb:Okubo}, we get the following examples of idempotents of each type:
\begin{itemize}
\item \emph{Quaternionic}: $\sum_{-1\leq i,j\leq 1,\, (i,j)\ne (0,0)}x_{i,j}$.
\item \emph{Quadratic:} any of the idempotents  $x_{i,j}+x_{-i,-j}$, $-1\leq i,j\leq 1$, $(i,j)\ne (0,0)$.
\item \emph{Singular:} $-x_{1,0}-x_{0,1}-x_{-1,-1}+x_{-1,1}+x_{1,-1}$.
\end{itemize}

The example of quaternionic idempotent above is the idempotent in  \eqref{eq:e}. Actually, a stronger result has been proved in \cite[Theorem 9.13]{CEKT}:

\begin{theorem}
The Okubo algebra $(\cO,*,n)$ contains a unique quaternionic idempotent: the idempotent $e$ in \eqref{eq:e}.
\end{theorem}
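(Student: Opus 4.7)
The plan is to combine Proposition \ref{pr:e_fixed}, which identifies $\FF e$ as the subalgebra of $\Aut(\cO,*)$-fixed elements, with a filtration analysis of idempotents via the nodal realization $(\cO,*)\cong(\FF_0[x,y],*)$ of Theorem \ref{th:Okubo_nodal}.

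First I would reformulate the idempotent condition inside $(\FF[x,y],\diamond)$. For $f\in\FF_0[x,y]$ with $n(f)=1$ (forced on any idempotent by Lemma \ref{le:symmetric}), equation \eqref{eq:xdiamondy} together with Proposition \ref{pr:Okubo_nodal}, using $\{f,f\}=0$, gives $f^2=f-1$ in $\FF[x,y]$ (since $n(f,f)=2n(f)=-1$ in characteristic three). Because $t^2-t+1=(t+1)^2$ in characteristic three, this is equivalent to $q^2=0$, where $q\bydef f+1$; the condition $f\in\FF_0[x,y]$ translates to $q$ having constant term $1$ in the monomial basis. Evaluating $q^2=0$ at $(1,1)$ forces $q\in\cM\bydef(x-1,y-1)$, and squaring on $\cM/\cM^2$ (injective in characteristic three, since $(\alpha(x-1)+\beta(y-1))^2$ is nonzero in $\cM^2/\cM^3$ unless $\alpha=\beta=0$) further places $q$ in $\cM^2$. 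The filtration $\cM\supset\cM^2\supset\cM^3\supset\cM^4\supset 0$ has dimensions $8,6,3,1,0$, and the one-dimensional socle is $\cM^4=\FF(1+e)$.

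The central claim I would aim for is the dictionary: $f$ is \emph{quadratic}, \emph{singular}, or \emph{quaternionic} according as $q\in\cM^2\setminus\cM^3$, $q\in\cM^3\setminus\cM^4$, or $q\in\cM^4$. Granted this, uniqueness is immediate: $\cM^4=\FF(1+e)$ is one-dimensional and only $q=1+e$ has constant term $1$, giving $f=e$; existence is the quaternionic example displayed above, where $q=1+e=(x-1)^2(y-1)^2\in\cM^4$. Writing $X\bydef x-1$ and $Y\bydef y-1$, the three examples in the list preceding the Theorem already verify the dictionary directly: for $e$ one has $q=X^2Y^2$; for $x_{1,0}+x_{-1,0}=x+x^2$ one has $q=1+x+x^2=X^2$; and for $-x_{1,0}-x_{0,1}-x_{-1,-1}+x_{-1,1}+x_{1,-1}$ a short calculation gives $q=-XY^2-X^2Y-X^2Y^2$, falling into $\cM^4$, $\cM^2\setminus\cM^3$, and $\cM^3\setminus\cM^4$ respectively.

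The main obstacle is proving the dictionary in general. My approach would be to analyze the order-three automorphism $\tau_f(x)=f*(f*x)$ through its extension to an automorphism of $(\FF[x,y],\diamond)$, which preserves both the commutative product and every $\cM^k$. Because $(\tau_f-\id)^3=0$ in characteristic three, the Jordan block structure of $\tau_f-\id$ --- and hence $\dim\cO^{\tau_f}$ together with the rank of $n|_{\cO^{\tau_f}}$ --- is governed by how $\tau_f-\id$ shifts the filtration, a shift in turn determined by the depth of $q$. Matching the resulting dimensions and ranks against the three cases of Lemma \ref{le:tau} yields the dictionary, and the uniqueness of the quaternionic idempotent follows.
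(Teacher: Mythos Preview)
The paper does not actually prove this theorem; it only cites \cite[Theorem 9.13]{CEKT}. So there is no in-paper argument to compare against, and your proposal must be judged on its own.

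Your reduction is correct and clean: for $f\in\FF_0[x,y]$ the condition $f*f=f$ is equivalent, via Proposition~\ref{pr:Okubo_nodal} and \eqref{eq:xdiamondy}, to $q^2=0$ in $\FF[x,y]$ with $q\bydef f+1$ having constant term $1$; the two-step filtration argument then places $q$ in $\cM^2$. Your computations on the three displayed examples are also correct (in particular $q=-X^2Y-XY^2-X^2Y^2\in\cM^3\setminus\cM^4$ for the singular one). Since $\cM^4=\FF(1+e)$ is one-dimensional, the implication ``quaternionic $\Rightarrow q\in\cM^4$'' would indeed force $f=e$.

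The gap is exactly where you locate it: the dictionary is asserted, not proved. You claim that the Jordan structure of $\tau_f-\id$ is ``governed by how $\tau_f-\id$ shifts the filtration, a shift in turn determined by the depth of $q$,'' but you give no formula connecting $\tau_f$ to $q$, no proof that $(\tau_f-\id)(\cM^k)\subseteq\cM^{k+d}$ for a specific $d$ read off from $q$, and no computation of $\dim\cO^{\tau_f}$ or of the rank of $n\vert_{\cO^{\tau_f}}$ in terms of that depth. Note that $\tau_f(z)=f*(f*z)$ is quadratic in $f$ and involves the full $\diamond$-product, not just multiplication by $q$ in the associated graded, so the link between ``depth of $q$'' and ``filtration shift of $\tau_f-\id$'' is not automatic. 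For the theorem at hand you only need the single implication ``$q\in\cM^2\setminus\cM^4\Rightarrow\dim\cO^{\tau_f}\le 4$''; even this restricted direction requires an explicit description of $\tau_f-\id$ on $\cM/\cM^2$ (and possibly on $\cM^2/\cM^3$) that your outline does not supply. The strategy is plausible and the filtration framework is the right one, but as written the argument stops at the point where actual work is needed.
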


For quadratic idempotents, the arguments in the proof of Lemma \ref{le:tau} can be used to prove that, if $f$ is such an idempotent of $(\cO,*,n)$, and $(\cO,\cdot,n)$ is the Cayley algebra with multiplication in \eqref{eq:Hurwitz_product}, then there are dual bases $\{u_1,u_2,u_3\}$ in $\cU$ and $\{v_1,v_2,v_3\}$ in $\cV$ such that $\tau(u_i)=u_{i+1}$ and $\tau(v_i)=v_{i+1}$ (indices modulo $3$). Since the multiplications $\cdot$ and $*$ are completely determined by these bases being dual relative to $n$, we get the `quadratic part' of our last result. The `singular part' is left as an exercise.

\begin{proposition}
The quadratic idempotents form a unique orbit under the action of $\Aut(\cO,*,n)$. The same happens for the singular idempotents.
\end{proposition}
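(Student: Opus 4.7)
The plan is to reduce both statements to conjugacy facts in the split Cayley algebra $(\cO,\cdot,n)$. By Proposition~\ref{pr:tau} and Remark~\ref{re:para-Hurwitz}, a pointed Okubo algebra $(\cO,*,n,f)$ is equivalent to the data of the Cayley algebra $(\cO,\cdot,n)$ with unity $f$ together with its order-three automorphism $\tau:x\mapsto f*(f*x)$. Hence an element of $\Aut(\cO,*,n)$ carrying an idempotent $f$ to a second idempotent $f'$ is the same as a Cayley-algebra isomorphism sending $f\mapsto f'$ and conjugating $\tau$ into $\tau'$. Because $\FF$ is algebraically closed, both Cayley structures are isomorphic to the split octonion algebra, so after one initial identification the task reduces to showing that any two order-three automorphisms of the split Cayley algebra that arise from idempotents of the same type are conjugate in $\Aut(\cO,\cdot)\cong G_2$.

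For the quadratic case I would follow the sketch indicated in the paragraph preceding the statement. By Lemma~\ref{le:tau}(ii), $\cO^\tau$ contains orthogonal idempotents $e_1,e_2$ with $e_1+e_2=1$, yielding a Peirce decomposition $\cO=\FF e_1\oplus\FF e_2\oplus\cU\oplus\cV$ in which $\cU,\cV$ are three-dimensional totally isotropic subspaces paired by $n$ and preserved by $\tau$. Since $\dim\cU^\tau=1$ and $\tau^3=1$ in characteristic three, the restriction $\tau|_\cU$ is a single unipotent Jordan block of size three; picking $u_1\in\cU$ with $u_1,\tau(u_1),\tau^2(u_1)$ linearly independent and setting $u_i=\tau^{i-1}(u_1)$ produces a basis $u_1,u_2,u_3$ of $\cU$ permuted cyclically by $\tau$, and taking a basis $v_1,v_2,v_3$ of $\cV$ dual to $u_1,u_2,u_3$, together with the fact that $\tau$ is an isometry, forces $\tau(v_i)=v_{i+1}$ modulo~$3$. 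Realizing the split Cayley algebra as the Zorn vector-matrix algebra with $\cU,\cV$ in the off-diagonal slots, the cross-product formulas show that the Cayley multiplication on this Peirce decomposition is completely determined by the pairing and the $\SL_3$-orbit of the basis $u_1,u_2,u_3$. Consequently the linear map sending the basis attached to $\tau$ to the basis attached to $\tau'$ is a Cayley automorphism intertwining $\tau$ and $\tau'$, and transferring it through Proposition~\ref{pr:tau} and Remark~\ref{re:para-Hurwitz} yields the required element of $\Aut(\cO,*,n)$ sending $f$ to $f'$.

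For the singular case the strategy is parallel but more delicate because the étale subalgebra is now absent. By Lemma~\ref{le:tau}(iii) the structural data reduce to a three-dimensional totally isotropic subspace $\cW\subseteq 1^\perp$ with $\cO^\tau=\FF 1\oplus\cW$. Since $G_2\cong\Aut(\cO,\cdot)$ acts transitively on the variety of such subspaces, one may first move $\cW$ to $\cW'$ by a Cayley automorphism and reduce to $\cW=\cW'$; the Cayley-algebra compatibility then constrains $\tau$ sufficiently, once its action on a complementary isotropic subspace pairing with $\cW$ is specified, that a direct computation inside the parabolic stabilizer of $\cW$ in $G_2$ shows all admissible $\tau$ to form a single conjugacy class. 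I expect this singular case to be the main obstacle: without a Peirce decomposition the argument has to be carried out by explicit computation inside the stabilizer of a maximal isotropic subspace of $1^\perp$, in contrast to the essentially $\SL_3$-linear-algebra argument that suffices in the quadratic case.
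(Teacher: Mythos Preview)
Your treatment of the quadratic case is essentially the paper's own argument: pass to the Cayley algebra $(\cO,\cdot,n)$ with unity $f$, use the Peirce decomposition coming from the \'etale subalgebra in $\cO^\tau$, and produce $\tau$-cyclic dual bases $\{u_i\}$, $\{v_i\}$ of $\cU$ and $\cV$. The only point you should make more explicit is the normalization step hidden in your phrase ``the $\SL_3$-orbit of the basis'': the Cayley product also involves the trilinear form $u_i\cdot u_j\in\cV$, and replacing $u_1$ by $\alpha u_1$ rescales this form by $\alpha^3$, so one uses algebraic closedness to bring it to standard form. With that said, the argument is correct and coincides with the paper's.

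For the singular case the paper leaves the proof as an exercise, so there is nothing to compare against directly; but your sketch has a genuine gap. The claim that $G_2=\Aut(\cO,\cdot)$ acts transitively on three-dimensional totally isotropic subspaces of $1^\perp$ is false: $OG(3,7)$ is six-dimensional, while the $G_2$-flag varieties are five-dimensional, and in fact $G_2$ has two orbits on $OG(3,7)$ (a closed five-dimensional one and an open six-dimensional one, visible via the decomposition of the spin module of $\mathrm{Spin}_7$ as $1\oplus 7$ under $G_2$). What you need instead is that $\cO^\tau=\FF 1\oplus\cW$ is a \emph{subalgebra}, which forces $\cW\cdot\cW\subseteq\cW$; such ``null'' three-planes are special, and it is for these that you must establish transitivity. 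Even granting that, the second step---showing that all order-$3$ unipotent automorphisms with a given $\cW$ as fixed locus are conjugate under the parabolic stabilizer---is the actual content of the singular case and cannot be dismissed as ``a direct computation'': you should expect to exhibit an explicit normal form for $\tau$ in terms of a well-chosen basis adapted to $\cW$ and $\cW^\perp/\cW$, analogous to what you did with the cyclic bases in the quadratic case.
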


%

\bigskip

\bibliographystyle{amsalpha}

\begin{thebibliography}{KMRT98}

\bibitem[AF54]{AF} A.-A.~Albert and M.-S.~Frank, \emph{Simple Lie algebras of characteristic $p$}, Univ. e Politec. Torino Rend. Sem. Mat. \textbf{14} (1954-1955), 117--139.

\bibitem[Blo58]{Block58}
    R.-E.~Block, \emph{New simple Lie algebras of prime characteristic},
   Trans. Amer. Math. Soc. \textbf{89} (1958), 421--449.

\bibitem[BW82]{BW82}
     R.-E.~Block and R.-L.~Wilson, \emph{The simple Lie p-algebras of rank two}, Ann. of Math. (2) \textbf{115} (1982), no.~1, 93--168.

\bibitem[CEKT13]{CEKT}
    V.~Chernousov, A.~Elduque, M.~Knus and J.-P.~Tignol, \emph{Algebraic Groups of Type $D_4$, Triality, and Composition Algebras}, Documenta Math. \textbf{18} (2013), 413--468.
    
\bibitem[Che54]{Che54}
    C.-C.~Chevalley, \emph{The algebraic theory of spinors}.
    Columbia University Press, New York, 1954.

\bibitem[Eld97]{Eld97}
    A.~Elduque, \emph{Symmetric composition algebras}, J.~Algebra, \textbf{196} (1997) 283--300.

\bibitem[Eld99a]{Eld99}
    A.~Elduque, \emph{Okubo algebras in characteristic $3$ and their automorphisms}, Comm. Algebra \textbf{27} (1999), no.~6, 3009--3030.

\bibitem[Eld99b]{Twisted}
    A.~Elduque, \emph{Okubo algebras and twisted polynomials}, Recent progress in algebra (Taejon/Seoul, 1997), 101--109, Contemp. Math. \textbf{224}, Amer. Math. Soc., 1999.

\bibitem[EK13]{EKmon} A.~Elduque and M.~Kochetov,  \emph{Gradings on simple {L}ie algebras}. Mathematical Surveys and Monographs \textbf{189},  American Mathematical Society, Providence, RI, 2013.

\bibitem[EM91]{EM91} A.~Elduque and H.-C.~Myung,
    \emph{Flexible composition algebras and Okubo algebras}, Comm. Algebra \textbf{19} (1993), no.~4, 1197--1227.

\bibitem[EM93]{EM93} A.~Elduque and H.-C.~Myung,
    \emph{On flexible composition algebras}, Comm. Algebra \textbf{21}
   (1993), no.~7, 2481--2505.

\bibitem[EP96]{ElduquePerez}
    A.~Elduque and J.-M.~P\'erez-Izquierdo, \emph{Composition algebras with associative bilinear form}, Comm. Algebra, \textbf{24} (1996), no.~3, 1091--1116.

\bibitem[KMRT98]{KMRT} M.-A.~Knus, A.~Merkurjev, M.~Rost, and J.-P.~Tignol,  \emph{The book of involutions.} (With a preface in French by J. Tits.)
    American Mathematical Society Colloquium Publications \textbf{44}, American Mathematical Society, Providence, RI, 1998.

\bibitem[Kok58]{Kokoris58}
    L.-A. Kokoris, \emph{Simple modal noncommutative Jordan algebras},
   Proc. Amer. Math. Soc. \textbf{9} (1958), 652--654.

\bibitem[Oku78]{Oku78}S.~Okubo,
    \emph{Pseudo-quaternion and pseudo-octonion algebras},
   Hadronic J. \textbf{1} (1978), no.~4, 1250--1278.

\bibitem[Oku95]{Okubo_book}
    S.~Okubo, \emph{Introduction to octonion and other non-associative algebras in physics},
   Montroll Memorial Lecture Series in Mathematical Physics \textbf{2}, Cambridge University Press, Cambridge, 1995.

\bibitem[OO81]{OO81} S.~Okubo and J.-M. Osborn \emph{Algebras with nondegenerate associative symmetric bilinear forms
   permitting composition}, Comm. Algebra \textbf{9} (1981), no.~12,
    1233--1261.

\bibitem[Pet69]{Petersson}
    H.-P.~Petersson, \emph{Eine Identit\"at f\"unften Grades, der gewisse Isotope von Kompositions-Algebren gen\"ugen}, Math. Z. \textbf{109} (1969) 217--238.

\bibitem[Str72]{Str72}
    H.~Strade, \emph{Nodale nichtkommutative Jordanalgebren und Lie-Algebren bei Charakteristik $p>2$}, J.~Algebra \textbf{21} (1972), 353--377.

\bibitem[Str04]{Str1}
    H.~Strade, \emph{Simple Lie algebras over fields of positive characteristic. I}, de Gruyter Expositions in Mathematics \textbf{38}, Walter de Gruyter GmbH \& Co. KG, Berlin, 2004.

\bibitem[Str09]{Str2}
    H.~Strade, \emph{Simple Lie algebras over fields of positive characteristic. II\ (Classifying the absolute toral rank two case)}, de Gruyter Expositions in Mathematics \textbf{42}, Walter de Gruyter GmbH \& Co. KG, Berlin, 2009.

\bibitem[Str13]{Str3}
    H.~Strade, \emph{Simple Lie algebras over fields of positive characteristic. III\ (Completion of the classification)}, de Gruyter Expositions in Mathematics \textbf{57}, Walter de Gruyter GmbH \& Co. KG, Berlin, 2013.

\bibitem[Tit59]{Tits}
    J.~Tits, \emph{Sur la trialit\'e et certains groupes qui s'en d\'eduisent}, Publ. Math. Inst. Hautes \'{E}tud. Sci. \textbf{2} (1959), 13--60.

\end{thebibliography}

\end{document}